\renewcommand*{\verbatim@font}{\sffamily}
\title{Lower Bounds for Small Ramsey Numbers on Hypergraphs\thanks{A preliminary version of this paper appeared in the proceedings of COCOON 2019.}}
\author{S. Cliff Liu \\
Princeton University	\\
\textsf{sixuel@cs.princeton.edu}
}
\begin{document}

\maketitle

% for removing page number
%\pagestyle{empty}
%\thispagestyle{empty}

\newcounter{dummy} \numberwithin{dummy}{section}
\newtheorem{lemma}[dummy]{Lemma}
\newtheorem{definition}[dummy]{Definition}
\newtheorem{remark}[dummy]{Remark}
\newtheorem{corollary}[dummy]{Corollary}
\newtheorem{claim}[dummy]{Claim}
\newtheorem{observation}[dummy]{Observation}
\newtheorem{conjecture}[dummy]{Conjecture}

%\newcounter{dummy}
\newtheorem{theorem}{Theorem}

\renewcommand{\algorithmicrequire}{\textbf{Input:}}
\renewcommand{\algorithmicensure}{\textbf{Output:}}

\begin{abstract}
    The Ramsey number $r_k(p, q)$ is the smallest integer $N$ that satisfies for every red-blue coloring on $k$-subsets of $[N]$, there exist $p$ integers such that any $k$-subset of them is red, or $q$ integers such that any $k$-subset of them is blue. In this paper, we study the lower bounds for small Ramsey numbers on hypergraphs by constructing counter-examples and recurrence relations. We present a new algorithm to prove lower bounds for $r_k(k+1, k+1)$. In particular, our algorithm is able to prove $r_5(6,6) \ge 72$, where there is only trivial lower bound on $5$-hypergraphs before this work. We also provide several recurrence relations to calculate lower bounds based on lower bound values on smaller $p$ and $q$. Combining both of them, we achieve new lower bounds for $r_k(p, q)$ on arbitrary $p$, $q$, and $k \ge 4$.
\end{abstract}

\section{Introduction}\label{intro}
At least how many guests you have to invite for a party to make sure there are either certain number of people know each other or certain number of people do not know each other? The answer is the classical Ramsey number.
Ramsey theory generally concerns unavoidable structures in graphs, and has been extensively studied for a long time \cite{erdos1956partition, shelah1988primitive, DBLP:conf/sat/HeuleKM16}.
However, determining the exact Ramsey number is a notoriously difficult problem, even for small $p$ and $q$.
For example, it is only known that the value of $r_2(5, 5)$ is between $43$ to $48$ inclusively, and for $r_2(10, 10)$, people merely know a much rougher range from $798$ to $23556$ \cite{DBLP:journals/jct/McKayR97, DBLP:journals/jct/Shearer86, DBLP:journals/dm/Shi03}.

As for the hypergraph case of $k \ge 3$, our understanding of Ramsey number is even less.
The only known exact value of Ramsey number is $r_3(4, 4) = 13$, with only loose lower bounds for other values of $p$, $q$, and $k$ \cite{DBLP:conf/soda/McKayR91, radziszowski1994small}.
Although some progresses have been made for $r_4(p, q)$, and particularly, lower bound for $r_4(5, 5)$ has been continuously pushed forward in the past thirty years, the recurrence relations remain the same, i.e., one can immediately obtain better lower bounds for $p, q \ge 6$ by substituting into improved bound for $r_4(5, 5)$, but there is no other way to push them further \cite{DBLP:journals/dm/Shastri90, DBLP:journals/dm/SongYL95}.

Another fruitful subject in Ramsey theory is the asymptotic order of Ramsey number. Using the so-called Stepping-up Lemma introduced by Erd\H{o}s and Hajnal, the Ramsey number $r_k(p, n)$ is lower bounded by the tower function $t_k(c \cdot f(n))$ defined by $t_1(x) = x, t_{i+1}(x) = 2^{t_i(x)}$, where $f(n)$ is some function on $n$ and $c$ is a constant depending on $p$ \cite{erdHos1965partition, graham1990ramsey}.
Recent research improves the orders of $r_4(5, n)$ and $r_4(6, n)$ and leads to similar bounds for $r_k(k+1, n)$ and $r_k(k+2,n)$ \cite{conlon2010hypergraph}.
We point out that their lower bounds for $r_k$ depends on $r_{k-1}$. In other words, to get a lower bound for $r_k(p, q)$, one must provide the lower bounds for some $r_{k-1}(p',q')$. More importantly, when focusing on Ramsey numbers on small $p,q$ values, the Stepping-up Lemma cannot be applied directly.
We refer readers to Chapter 4.7 in \cite{graham1990ramsey} for details.

It is well known that directly improving the lower bounds for Ramsey number is extremely hard, since it requires tremendous computing resources \cite{gaitan2012ramsey}. A possible method to attack this is to use recurrence relations based on the initial values.
However, calculating a good initial value itself can be way beyond our reach. %\cite{wang2016determining}.
For instance, a simple attempt to push the current best lower bound $r_2(6,6) \ge 102$ could be constructing a CNF (Conjunctive Normal Form) whose satisfying assignment is equivalent to a $6$-clique free and $6$-independent-set free graph on $102$ vertices. This CNF has size (the number of literals in the formula) about ${10}^{10}$, but state-of-the-art SAT solvers are only capable of solving CNF with size no more than ${10}^{6}$, and is almost sure to not terminate in reasonable time \cite{DBLP:conf/sat/TompkinsH04a, DBLP:series/faia/2009-185}. %\footnote{There are about ${10}^{10}$ literals in this CNF, and the symmetry makes it even more intractable. To get a quick impression of the current limitation of SAT solvers, please check the hard combinatorial track in \url{http://www.satcompetition.org/.}}

\paragraph{Contributions.}
We prove several recurrence relations in the form of $r_k(p,q) \ge d\cdot (r_k(p-1,q) - 1) + 1$, where $d$ depends on $p$, $q$, and $k$.
Two of them are for arbitrary integer $k \ge 4$. To the best of our knowledge, this is the first recurrence relation on $r_k(p,q)$ not depending on $r_{k-1}(p,q)$, but for arbitrary $k$.
To build our proof, we introduce a method called \emph{pasting}, which constructs a good coloring by combining colorings on smaller graphs.
The recurrence relations are proven by inductions, where several base cases are proven by transforming to an equivalent CNF and solved by a SAT solver.
Additionally, to obtain a good initial values of the recurrence relations, a new algorithm for constructing counter-example hypergraphs is proposed, which efficiently proves a series of lower bounds for Ramsey number on $k$-hypergraphs including $r_5(6,6) \ge 72$: the first non-trivial result of lower bounds on $5$-hypergraphs.
The algorithm is based on local search and is easy to implement.
Combining both techniques, we significantly improve the lower bounds for $r_4(p, q)$ and achieve new non-trivial lower bounds for $r_k(p, q)$ on arbitrary $p$, $q$, and $k \ge 5$.

\paragraph{Roadmap.}
The rest of this paper is organized as follows.
In \S{\ref{preliminaries}} we introduce fundamental definitions.
The basic forms of recurrence relations are given in \S{\ref{forms}}.
In \S{\ref{smallk}} we present proofs for the recurrence relations on several small values of $k$, followed by two recurrence relations on arbitrary $k$ in \S{\ref{bigk}}.
Finally, we summarize some of our new lower bounds in \S{\ref{lower_bounds}}.
%Finally we conclude this paper in \S{\ref{conclusion}}.
The formal recurrence relations are given in Theorems \ref{k4}, \ref{k567_p-1}, \ref{k_p-1}, \ref{two_divide}, and \ref{large_q}.
Our algorithm for calculating lower bounds for $r_{k}(k+1, k+1)$ is presented in Appendix~\S{\ref{local_search}}.

\section{Preliminaries}\label{preliminaries}
In this section, basic notations in Ramsey theory are introduced, followed by a sketch of our proof procedure.
Then we propose our key definitions and several useful conclusions.

\subsection{Notations}
% hyperedge, hypergraph and complete
A $k$-uniform hypergraph $G(V, E; k)$ is a tuple of vertex set $V$ and a set $E$ of hyperedges such that each hyperedge in $E$ is a $k$-subset of $V$, where each $e \in E$ is called a $k$-hyperedge. If the context is clear, $G(V,E)$ or $G$ is used instead.
A complete $k$-uniform hypergraph consists of all possible $k$-subsets of $V$ as its hyperedge set.
% for short
We only deal with complete $k$-uniform hypergraphs and may use $k$-graph (or graph) and edge for short.
Given a vertex set $V$ with $|V| \ge k$, we use $V^{(k)}$ to denote the complete $k$-uniform hypergraph.

% coloring
A coloring is a mapping $\chi^{(k)}: E \rightarrow \{\text{red}, \text{blue}\}$ that maps all $k$-hyperedges in $E$ to red or blue.
We write $\chi^{k}(e)=\text{red}$ for coloring some edge $e \in E$ with red under $\chi^k$.
Given $G(V, E; k)$, we say $\chi^{(k)}$ is a $(p,q;k)$-coloring of $G$ if there is neither red $p$-clique nor blue $q$-clique in $G$.
We also use $\chi$ instead of $\chi^{(k)}$ if there is no ambiguity.
A $p$-clique is a complete subgraph induced by $p$ vertices, and a red (resp. blue) $p$-clique is a clique where all edges are red (resp. blue).

% Ramsey number
The Ramsey number $r_k(p,q)$ is the minimum integer $N$ that satisfies there is no $(p,q;k)$-coloring for $G(V,E;k)$ on $|V| = N$ vertices. In other words, for any coloring on $G$, there is either a red $p$-clique or a blue $q$-clique.

\subsection{A Proof Procedure}\label{proof_procedure}
We prove recurrence relations in the form of $r_k(p,q) \ge d\cdot (r_k(p-1,q) - 1) + 1$ by the following procedure \textsf{Pasting}:
%\noindent\rule{\columnwidth}{0.75pt}
%\vspace{-0.45cm}
\begin{enumerate}%[label=\subscript{Step}{\arabic*}]
    \item Given integer $d$, for each $i \in [d]$, let $G_i(V_i, E_i)$ be a graph on $r_k(p-1,q) - 1$ vertices with $(p-1,q;k)$-coloring $\chi_i$.
    \item Add an edge for every $k$-subset of $\bigcup_{i \in [d]} V_i$ if there is no edge on it. Denote the set of added edges as $\mathfrak{E}$. Let the complete graph after adding all edges be $\mathbf{G}(\bigcup_{i \in [d]} V_i, (\bigcup_{i \in [d]} E_i ) \bigcup \mathfrak{E})$.
    \item Construct $\chi'$ on $\mathfrak{E}$ such that $\chi \coloneqq (\bigcup_{i \in [d]} \chi_i) \bigcup \chi'$ on $\mathbf{G}$ satisfies that each $p$-clique of $\bigcup_{i \in [d]} V_i$ contains a blue edge and each $q$-clique of it contains a red edge. \label{step_3}
    \item It can be concluded that $r_k(p,q) \ge d\cdot (r_k(p-1,q) - 1) + 1$ since $\chi$ is a $(p,q;k)$-coloring for $\mathbf{G}$ and $\left| \bigcup_{i \in [d]} V_i \right| = d\cdot (r_k(p-1,q) - 1)$.
\end{enumerate}
%\vspace{-0.2cm}
%\noindent\rule{\columnwidth}{0.75pt}
%\textsf{Pasting} takes $d$ graphs and $\chi^d = \bigcup_i^d \chi_i$ on it as input.
The non-trivial step in \textsf{Pasting} is Step \ref{step_3} (\emph{coloring construction}), which will be discussed in details in \S{\ref{smallk}} and \S{\ref{bigk}}.
\textsf{Pasting}$(k,p,q,d)$ is \emph{successful} if $\chi = (\bigcup_{i \in [d]} \chi_i) \bigcup \chi'$ can be found.

\subsection{Primal Cardinality Vector}

% High level
Observe that the coloring construction cannot depend on the order of $G_i$ dues to symmetry, thus a primal order shall be fixed and our coloring depends only on the sequence of cardinalities of the intersections in non-increasing order. We introduce the following concepts concerning this.

Let $V_1, V_2, \dots, V_d$ be $d$ disjoint sets each with cardinality $r_k(p-1,q)-1$, and let $V$ be $\bigcup_{i \in [d]} V_i$. For any $\sigma$-subset $X \subseteq V$, define cardinality vector $\mathbf{\hat{v}}(X) = (\hat{v}_1, \hat{v}_2, \dots, \hat{v}_d)$ where $\hat{v}_i = |X \bigcap V_i|$. Let $\hat{v}_{(1)}, \hat{v}_{(2)}, \dots, \hat{v}_{(d)}$ be the sequence after sorting the $\hat{v}_i$'s in a non-increasing order.
% primal cardinality vector
\begin{definition}\label{pcv}
    Given $V$, $X$, and $\{\hat{v}_{(i)} \mid i \in [d]\}$ as above,
    define \emph{primal cardinality vector} of $X$ as $\mathbf{v}(X) = (v_1, v_2, \dots, v_{\pi(X)})$, where $v_i = \hat{v}_{(i)}$ for all $i \in [\pi(X)]$, and $\pi(X)$ satisfies either (\romannumeral1) $\pi(X) = d$ or (\romannumeral2) $\hat{v}_{(\pi(X))} > 0$ and $\hat{v}_{(\pi(X) + 1)} = 0$.
\end{definition}
In a word, $\mathbf{v}(X)$ is a sequence of all positive coordinates of the cardinality vector $\mathbf{\hat{v}}(X)$ in a non-increasing order.
% example here
%We shall give an example for the above definition. Suppose $X$ only intersects with $S_i, S_j$ and $|X \bigcap S_i| = 3, |X \bigcap S_j| = 5~(i \ne j, i \le d, j\le d)$. Then $\hat{v}_i = 3, \hat{v}_j = 5$ and $\hat{v}_k = 0$ for any $k \in [d]$ not equal to $i$ or $j$, and $\mathbf{v}(X) = (5,3)$.
Observe that when $\sigma = |X| = k$, $X$ corresponds to some edge $e(X)$ in $\mathbf{G}$, and $\mathbf{v}(X) = (v_1, v_2, \dots, v_{\pi(X)})$ essentially means that $e(X)$ has $v_i$ endpoints in the $i$-th subgraph (in a non-increasing order of the cardinalities of intersections).
Usually primal cardinality vector $\mathbf{v}$ shows up without indicating which set $X$ it corresponds to, and we refer $\pi(\mathbf{v})$ to the length of $\mathbf{v}$.
%\footnote{Reader familiar with Integer Partition might notice that $\mathbf{v}$ corresponds to a $\pi(\mathbf{v})$-partition on integer $n = \sum_i^{\pi(\mathbf{v})} v_i$ \cite{DBLP:books/daglib/0083722}\cite{moshkovitz2014ramsey}.}

The following remark captures the idea we proposed at the beginning of this subsection.
% further remark on coloring
\begin{remark}
    In Step \ref{step_3} of \textsf{Pasting}, $\forall e_1,e_2 \in \mathfrak{E}$, $\chi'(e_1) = \chi'(e_2)$ if $\mathbf{v}(e_1) = \mathbf{v}(e_2)$.
\end{remark}

We will write $\mathbf{v}(e)$ instead of $\mathbf{v}(X)$ when $X$ corresponds to edge $e$.
In this case, abusing the notation slightly, we write $\chi(\mathbf{v}(e))$ as the color under $\chi$ on edge $e$, since all edges with the same primal cardinality vector $\mathbf{v}$ are in same color.
Furthermore, we write $\chi(\mathbf{v}) = c$ where $c$ is red or blue for assigning all edges with primal cardinality vector $\mathbf{v}$ to color $c$.
%Because $\chi$ agrees with every bit in $\chi_i$ and only $\chi'$ is required for construction in \textsf{Pasting}, we also call $\chi'$ a $(p,q;k)$-coloring.
%and this is essentially the purpose of introducing \emph{primal cardinality vector}.
For any $i \in [\pi(X)]$, $v_i(X)$ is the $i$-th coordinate of $\mathbf{v}(X)$.

\begin{remark}{\label{remark2}}
    For any non-trivial $\sigma$-subset $X$ and any $\tau$-subset $Y \subseteq X$, it must be that:
    (\romannumeral1) $~\sum_{i \in [\pi(X)]} v_i(X) = \sigma > 0$,
    (\romannumeral2) $~\sum_{i \in [\pi(Y)]} v_i(Y) = \tau \le \sigma$, (\romannumeral3) $\pi(X) \ge \pi(Y)$,
    and (\romannumeral4) $\forall i \in [\pi(Y)]$, $v_i(X) \ge v_i(Y)$.
\end{remark}
\begin{proof}
    The first three bullets are simple cardinality properties. To show that property (\romannumeral4) holds, let $j$ be the smallest index with $v_j(Y) > v_j(X)$. If $j=1$, there is no way to fit the largest subset of $Y$ into any subset of $X$. Else if $j > 1$, the only way to fit $Y_j$ into a subset of $X$ is to swap it with some $Y_i~(i < j)$, but $v_i(Y) \ge v_j(Y) > v_i(X)$, then $Y_i$ cannot fit into the $i$-th subset of $X$.
    %As a result, for any $\sigma$-subset $X$ and all $\mathbf{v}'$ satisfies all four properties above, there exist a $Y \subseteq X$ with $\mathbf{v}(Y) = \mathbf{v}'$.
 \end{proof}
\begin{definition}\label{partial_order}
    Given two primal cardinality vectors $\mathbf{v_1}$ and $\mathbf{v_2}$, define \emph{partial order} between them as: $\mathbf{v_1} \le_c \mathbf{v_2}$ if and only if
    (\romannumeral1) $\pi(\mathbf{v_1}) \le \pi(\mathbf{v_2})$
    and (\romannumeral2) $\forall i \in [\pi(\mathbf{v_1})]$, $\mathbf{v_1}_i \le \mathbf{v_2}_i$.
    If at least one of the inequalities in (\romannumeral1) and (\romannumeral2) is strict, then $\mathbf{v_1} <_c \mathbf{v_2}$.
    \footnote{$\mathbf{v_2} \ge_c \mathbf{v_1}$ reads ``$v_2$ contains $v_1$".
    %Examples: $(3,2) \ge_c (3,1), (3,2) \ge_c (2,2), (2,2,1) \ge_c (2,2)$, but no such relation between $(3,1)$ and $(2,2)$.
    }
\end{definition}
One can easily show that \emph{reflexivity}, \emph{antisymmetry} and \emph{transitivity} for any partial order hold for $\le_c$.
Under this definition, with Remark~{\ref{remark2}} and subsets enumeration we can immediately conclude the following:
\begin{corollary}\label{subset}
    Given $V = \bigcup_{i \in [d]} V_i$, $G = V^{(k)}$ and $X \subseteq V$, we have $\forall Y \subseteq X, \mathbf{v}(Y) \le_c \mathbf{v}(X)$,
    and $\forall \mathbf{v'} \le_c \mathbf{v}(X)$, $\exists Y \subseteq X$ with $\mathbf{v}(Y) = \mathbf{v'}$.
    Specifically, $Y$ corresponds to an edge $e(Y)$ of $G$ when $\sum_{i \in [\pi(\mathbf{v'})]} \mathbf{v'}_i = k$, and $e(Y)$ is an edge of $X^{(k)}$.
\end{corollary}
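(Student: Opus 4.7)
The plan is to split the corollary into its three assertions and handle each in turn, leaning heavily on Remark~\ref{remark2}, which already contains the combinatorial work.

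For the first assertion, I would simply invoke Remark~\ref{remark2}(iii) and~(iv) applied to the pair $Y \subseteq X$: property~(iii) gives $\pi(X) \ge \pi(Y)$, and property~(iv) gives $v_i(X) \ge v_i(Y)$ for all $i \in [\pi(Y)]$. These are precisely the two conditions defining $\mathbf{v}(Y) \le_c \mathbf{v}(X)$ in Definition~\ref{partial_order}, so no further argument is needed.

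For the second assertion, I would give an explicit construction. Write $\mathbf{v}(X) = (v_1,\dots,v_{\pi(X)})$ and let $V_{j_1},V_{j_2},\dots,V_{j_{\pi(X)}}$ be the parts realizing these coordinates, so that $|X \cap V_{j_i}| = v_i$. Given any $\mathbf{v'} \le_c \mathbf{v}(X)$, the assumption $\pi(\mathbf{v'}) \le \pi(X)$ together with $\mathbf{v'}_i \le v_i$ for each $i \in [\pi(\mathbf{v'})]$ lets me pick, for every $i \in [\pi(\mathbf{v'})]$, an arbitrary subset $Y_i \subseteq X \cap V_{j_i}$ with $|Y_i| = \mathbf{v'}_i$. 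Setting $Y := \bigcup_{i \in [\pi(\mathbf{v'})]} Y_i$ gives $Y \subseteq X$, and since $\mathbf{v'}$ is by definition non-increasing and has only positive coordinates, the multiset $\{|Y \cap V_j|\}_{j \in [d]}$ sorted in non-increasing order, stripped of zeros, is exactly $\mathbf{v'}$. Hence $\mathbf{v}(Y) = \mathbf{v'}$.

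The third assertion is then an immediate observation: if $\sum_{i} \mathbf{v'}_i = k$, the $Y$ constructed above has $|Y| = k$, so $Y$ is a $k$-subset of $V$ and thus corresponds to an edge $e(Y)$ of $G = V^{(k)}$; since $Y \subseteq X$, it is also an edge of $X^{(k)}$.

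I do not anticipate a real obstacle here — the corollary is essentially a restatement of Remark~\ref{remark2} plus a converse direction. The only mildly delicate point is bookkeeping in the construction of $Y$: one must be careful that after taking $\mathbf{v'}_i$ elements from the $i$-th largest intersection, the resulting sizes $|Y \cap V_j|$, once sorted, really agree coordinate-by-coordinate with $\mathbf{v'}$ (including the absence of spurious equalities that might shuffle the non-increasing order). This is handled by picking from the intersections in the fixed order $V_{j_1},\dots,V_{j_{\pi(X)}}$ dictated by $\mathbf{v}(X)$, which preserves the non-increasing property because $\mathbf{v'}_i \le v_i$ and $\mathbf{v'}$ is itself non-increasing.
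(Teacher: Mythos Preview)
Your proposal is correct and matches the paper's own approach: the paper does not give a standalone proof but simply notes that the corollary follows ``with Remark~\ref{remark2} and subsets enumeration,'' which is exactly your first assertion (Remark~\ref{remark2}(iii),(iv)) plus your explicit construction of $Y$ for the converse. Your write-up merely unpacks what the paper leaves implicit, and the bookkeeping worry you flag is not a genuine issue since the multiset of nonzero intersection sizes of $Y$ is literally $\{\mathbf{v'}_1,\dots,\mathbf{v'}_{\pi(\mathbf{v'})}\}$, so sorting recovers $\mathbf{v'}$ regardless of ties.
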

% proof omitted

Given any subset of $V$,
observe that there are at most $d$ different subsets to be intersected with. As a result, given a $s$-subset, we only concern subsets with primal cardinality vectors in the following set:
%We have the following definition.
\begin{definition}{\label{pcv_set}}
    Define $\mathbf{V_s}(d)$ as the set of all primal cardinality vectors $\mathbf{v}$ such that $\pi(\mathbf{v}) \le d$ and $\sum_{i \in [\pi(\mathbf{v})]} \mathbf{v}_i = s$.
\end{definition}
\begin{remark}\label{remark3}
    $\forall d \ge s, \mathbf{V_s}(d) = \mathbf{V_s}(s)$.
    %Because $\forall \mathbf{v} \in \mathbf{V_s}(d), \pi(\mathbf{v}) \le s$.
\end{remark}

Based on Corollary~\ref{subset}, we conclude this section with the following corollary:
\begin{corollary}{\label{partition_1}}
    Given integers $p$, $q$, $k$, $d$, $V = \bigcup_{i \in [d]} V_i$, and $G = V^{(k)}$, the following four statements are equivalent:
    \begin{enumerate}
        \item $\exists \chi$ such that $\forall \mathbf{v} \in \mathbf{V_p}(d)$ (resp. $\forall \mathbf{v} \in \mathbf{V_q}(d)$ ), $\exists \mathbf{v'} \in \mathbf{V_k}(d)$ such that $\mathbf{v'} \le_c \mathbf{v}$ and $\chi(\mathbf{v'})= \text{blue}$ (resp. red). \label{statement_1}
        \item $\exists \chi$ such that $\forall p$-subset (resp. $q$-subset) $X \subseteq V$, $\exists k$-hyperedge $e$ of $X^{(k)}$ such that $\chi(e)= \text{blue}$ (resp. red).
        \item \emph{\textsf{Pasting}}$(k,p,q,d)$ is successful.
        \item $r_k(p,q) \ge d\cdot (r_k(p-1,q) - 1) + 1$.
    \end{enumerate}
\end{corollary}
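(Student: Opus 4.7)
The plan is to prove the four-way equivalence through three bridges: $(1)\Leftrightarrow(2)$ via the vector-subset correspondence of Corollary~\ref{subset}, $(2)\Leftrightarrow(4)$ via the definition of $r_k(p,q)$, and $(2)\Leftrightarrow(3)$ via the Pasting procedure itself. Since $(2)$ sits in the middle of all three bridges, this is enough to chain the four statements together.

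The first bridge, $(1)\Leftrightarrow(2)$, is where Corollary~\ref{subset} is the workhorse. For $(1)\Rightarrow(2)$, I would promote the vector coloring $\chi$ on $\mathbf{V_k}(d)$ to an edge coloring on $V^{(k)}$ by $\chi(e) := \chi(\mathbf{v}(e))$. For any $p$-subset $X$, Remark~\ref{remark2} places $\mathbf{v}(X)$ in $\mathbf{V_p}(d)$; (1) supplies some $\mathbf{v}'\in \mathbf{V_k}(d)$ with $\mathbf{v}'\le_c \mathbf{v}(X)$ and $\chi(\mathbf{v}')=\text{blue}$; and Corollary~\ref{subset} then realises $\mathbf{v}'$ as $\mathbf{v}(Y)$ for some $Y\subseteq X$, giving a blue edge $e(Y)\in X^{(k)}$. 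The red/$q$-subset case is dual, and $(2)\Rightarrow(1)$ reads off $\chi$ on each vector class from a vector-respecting witness of (2) and runs the same correspondence in reverse. The second bridge, $(2)\Leftrightarrow(4)$, is a direct application of the definition of the Ramsey number: $|V|=d(r_k(p-1,q)-1)$, and (2)'s $(p,q;k)$-coloring on $V^{(k)}$ exists iff $r_k(p,q)>|V|$, which is (4). The third bridge, $(3)\Leftrightarrow(2)$, is built into the Pasting procedure: Step~3 of Pasting constructs a coloring on $\mathbf{G}=V^{(k)}$ satisfying (2) verbatim, while conversely any witness of (2) yields a successful Pasting by choosing $(p-1,q;k)$-colorings $\chi_i$ on each $V_i$ (possible because $|V_i|=r_k(p-1,q)-1$) and reading off $\chi'$ on $\mathfrak{E}$ from the vector-symmetrized restriction of the witness.

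The main obstacle I foresee is in the converse $(2)\Rightarrow(3)$: a generic $(p,q;k)$-coloring on $V$ does not restrict to a $(p-1,q;k)$-coloring on each $V_i$, so the $\chi_i$'s must be chosen independently of the global witness, and one must verify that the resulting combined coloring still has no red $p$-clique or blue $q$-clique straddling multiple $V_i$'s. The fix is to first apply $(1)\Leftrightarrow(2)$ and upgrade the witness to a vector-respecting coloring; once $\chi'$ on $\mathfrak{E}$ depends only on primal cardinality vectors, each cross-$V_i$ clique condition reduces via Corollary~\ref{subset} to a statement about $\chi'$ alone, and is therefore compatible with any admissible choice of the $\chi_i$'s.
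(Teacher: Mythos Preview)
Your forward chain $(1)\Rightarrow(2)\Rightarrow(4)$ and $(1)\Rightarrow(3)\Rightarrow(4)$ is correct and matches the paper's intent; the paper itself gives no detailed argument, stating the corollary as an immediate consequence of Corollary~\ref{subset}, and every later application in the paper uses only the direction $(1)\Rightarrow(4)$.

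The gap is in your reverse implication $(2)\Rightarrow(1)$. You write that one ``reads off $\chi$ on each vector class from a vector-respecting witness of~(2),'' but statement~(2) supplies no such witness: an arbitrary $(p,q;k)$-coloring of $V^{(k)}$ may assign different colors to edges sharing the same primal cardinality vector, so there is nothing to read off. You then notice the same difficulty in $(2)\Rightarrow(3)$ and propose to fix it by invoking $(1)\Leftrightarrow(2)$ to ``upgrade the witness to a vector-respecting coloring''---but this is circular, since the upgrade is precisely the unproven direction $(2)\Rightarrow(1)$. The same obstruction blocks $(4)\Rightarrow(3)$ and $(4)\Rightarrow(1)$: the Ramsey inequality guarantees only \emph{some} $(p,q;k)$-coloring on $d\cdot(r_k(p-1,q)-1)$ vertices, with no a~priori relation to the partition $V=\bigcup_i V_i$, let alone to the stronger $(p-1,q;k)$-requirement on each block that \textsf{Pasting} imposes. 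Whether a partition-respecting coloring can always be extracted from an arbitrary one is a separate question that the paper neither addresses nor needs; if you want the full four-way equivalence you will need an independent argument for this direction rather than a reduction to the forward one.
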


\section{Forms of Recurrences}\label{forms}
We prove $r_k(p,q) \ge d\cdot (r_k(p-1,q) - 1) + 1$ for three different forms of $d$ under different conditions:
(1) $d = 2$,
(2) $d = p - 1$,
and (3) $d = \lfloor \frac{q-1}{k-2} \rfloor$.
Form (3) requires the strongest condition but its proof turns out to be simpler.
For forms (1) and (2), we show that to prove recurrence relation on given $k$ and arbitrary $p, q$, it is sufficient to prove the base case on $p$ and $q$, i.e., prove the case on $p = p_0, q = q_0$ for some constants $p_0, q_0$.
%, which greatly simplifies our later proof.
%The following lemma is for form (1).

Firstly we show that for a given integer $d$, if $r_k(p_0,q_0) \ge d\cdot (r_k(p_0-1,q_0) - 1) + 1$ is given by \textsf{Pasting}, then $r_k(p, q) \ge d \cdot (r_k(p-1,q) - 1) + 1$.
%\subsection{Constant cases}
\begin{lemma}{\label{constant}}
    %Given integer $k \ge 4$ and $d \ge 1$, if $r_k(p_0,q_0) \ge d\cdot (r_k(p_0-1,q_0) - 1) + 1$ is proved by \emph{\textsf{Pasting}}, then $\forall p \ge p_0, q \ge q_0, r_k(p,q) \ge d\cdot (r_k(p-1,q) - 1) + 1$.
    Given integer $d$,
    if \emph{\textsf{Pasting}}$(k,p_0,q_0,d)$ is successful, then $\forall p \ge p_0, q \ge q_0$, \emph{\textsf{Pasting}}$(k,p,q,d)$ is successful, which is $r_k(p,q) \ge d\cdot (r_k(p-1,q) - 1) + 1$.
\end{lemma}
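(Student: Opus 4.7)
The plan is to translate the statement, via Corollary~\ref{partition_1}, into a purely combinatorial claim about primal cardinality vectors, and then exploit the monotonicity of the partial order $\le_c$.

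First I would unpack the hypothesis. Since \textsf{Pasting}$(k,p_0,q_0,d)$ is successful, statement~3 of Corollary~\ref{partition_1} instantiated at $(p_0,q_0)$ holds, so statement~1 supplies a coloring $\chi$ on $\mathbf{V_k}(d)$ such that every $\mathbf{v}\in\mathbf{V_{p_0}}(d)$ contains some $\mathbf{v'}\in\mathbf{V_k}(d)$ with $\mathbf{v'}\le_c\mathbf{v}$ and $\chi(\mathbf{v'})=\text{blue}$, and symmetrically every $\mathbf{v}\in\mathbf{V_{q_0}}(d)$ contains a red witness. I would then take this same $\chi$ as my candidate certificate for the larger parameters $(p,q)$, observing that $\chi$ is defined on $\mathbf{V_k}(d)$ and so is independent of which $p,q$ we happen to be analysing.

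The core step is a monotonicity claim for $\mathbf{V_s}(d)$: whenever $s\ge s_0$, every $\mathbf{v}\in\mathbf{V_s}(d)$ dominates some $\mathbf{v_0}\in\mathbf{V_{s_0}}(d)$ in the $\le_c$ order. A short constructive argument suffices: starting from $\mathbf{v}$, repeatedly subtract one from the last positive coordinate (dropping the coordinate when it hits zero) until the sum drops to $s_0$; the resulting vector is still non-increasing, has length at most $d$, sums to $s_0$, and is coordinate-wise below $\mathbf{v}$, so it lies in $\mathbf{V_{s_0}}(d)$ and is $\le_c\mathbf{v}$. Alternatively, one invokes Corollary~\ref{subset} directly: realize $\mathbf{v}$ as $\mathbf{v}(X)$ for some $s$-subset $X\subseteq V=\bigcup_{i\in[d]} V_i$ (assuming each $|V_i|$ large enough, which is not restrictive for the reduction) and pass to any $s_0$-subset $Y\subseteq X$.

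Transitivity of $\le_c$ then closes the loop. For $\mathbf{v}\in\mathbf{V_p}(d)$ with $p\ge p_0$, pick $\mathbf{v_0}\in\mathbf{V_{p_0}}(d)$ with $\mathbf{v_0}\le_c\mathbf{v}$ and a blue witness $\mathbf{v'}\in\mathbf{V_k}(d)$ with $\mathbf{v'}\le_c\mathbf{v_0}$; transitivity yields $\mathbf{v'}\le_c\mathbf{v}$. Symmetric treatment with colors swapped handles every $\mathbf{v}\in\mathbf{V_q}(d)$ with $q\ge q_0$. Hence statement~1 of Corollary~\ref{partition_1} is satisfied at $(p,q)$, and invoking the corollary in the reverse direction gives both the success of \textsf{Pasting}$(k,p,q,d)$ and the recurrence $r_k(p,q)\ge d\cdot(r_k(p-1,q)-1)+1$. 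I do not expect any real obstacle here; the substantive content is the monotonicity claim above, which merely formalizes the intuition that shrinking a clique only relaxes the constraints the coloring must satisfy.
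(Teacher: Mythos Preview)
Your proposal is correct and follows essentially the same route as the paper's proof: translate via Corollary~\ref{partition_1} to primal cardinality vectors, use the monotonicity that every $\mathbf{v}\in\mathbf{V_s}(d)$ dominates some vector in $\mathbf{V_{s_0}}(d)$ for $s\ge s_0$ (the paper invokes Corollary~\ref{subset} for this, which you also mention as an alternative to your explicit coordinate-shrinking construction), and close with transitivity of $\le_c$. The only cosmetic difference is that you spell out the monotonicity step constructively, whereas the paper cites Corollary~\ref{subset} directly.
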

\begin{proof}
    The proof relies on Corollary~\ref{partition_1}.
    Let $\chi_0$ be a $(p_0,q_0;k)$-coloring fed to \textit{\emph{\textsf{Pasting}}}.
    We have $\forall \mathbf{v} \in \mathbf{V_{p_0}}(d)$, $\exists \mathbf{v'} \in \mathbf{V_k}(d)$ such that $\mathbf{v'} \le_c \mathbf{v}$ and $\chi_0(\mathbf{v'})=$ blue.
    Meanwhile, by Corollary~\ref{subset} we know that $\forall p \ge p_0, \forall \mathbf{u} \in \mathbf{V_p}(d)$, $\exists \mathbf{v} \in \mathbf{V_{p_0}}(d)$ such that $\mathbf{v} \le_c \mathbf{u}$.
    By transitivity it must be that $\mathbf{v'} \le_c \mathbf{u}$.
    This means $\forall p \ge p_0, \forall \mathbf{u} \in \mathbf{V_p}(d)$,
    $\exists \mathbf{v'} \in \mathbf{V_k}(d)$ such that $\mathbf{v'} \le_c \mathbf{u}$ and $\chi_0(\mathbf{v'})= \text{blue}$.
    Using the same reasoning we get $\forall q \ge q_0, \forall \mathbf{u} \in \mathbf{V_q}(d)$,
    $\exists \mathbf{v'} \in \mathbf{V_k}(d)$ such that $\mathbf{v'} \le_c \mathbf{v}$ and $\chi_0(\mathbf{v'})= \text{red}$, and the conclusion follows.
 \end{proof}

Secondly we give the following lemma showing that the induction from the base case to arbitrary $p,q$ also holds for form (2).
\begin{lemma}{\label{dp}}
    Given $p_0 \ge q_0 + 1$, if \emph{\textsf{Pasting}}$(k,p_0,q_0,p_0-1)$ is successful, %for some $p_0,q_0 \ge k+1$,
    then $\forall p \ge p_0, q \ge q_0$, \emph{\textsf{Pasting}}$(k,p,q,p-1)$ is successful, which is $r_k(p,q) \ge (p-1) \cdot (r_k(p-1,q) - 1) + 1$.
\end{lemma}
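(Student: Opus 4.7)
The plan is to lift the base-case coloring of \textsf{Pasting}$(k, p_0, q_0, p_0-1)$ to every pair $(p,q)$ with $p \ge p_0$ and $q \ge q_0$, in the same spirit as Lemma~\ref{constant}. The essential new twist is that $d = p - 1$ is no longer constant but grows with $p$, so $\mathbf{V_k}(p_0-1) \subseteq \mathbf{V_k}(p-1)$ in general properly, and the coloring must be extended to the newly available primal cardinality vectors while still placing a blue $k$-edge in every $p$-clique and a red $k$-edge in every $q$-clique.

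By Corollary~\ref{partition_1}, the hypothesis yields a coloring $\chi_0$ on $\mathbf{V_k}(p_0-1)$ such that every $\mathbf{u} \in \mathbf{V_{p_0}}(p_0-1)$ dominates, under $\le_c$, some blue $\mathbf{v'} \in \mathbf{V_k}(p_0-1)$, and every $\mathbf{u} \in \mathbf{V_{q_0}}(p_0-1)$ dominates some red one. I define $\chi$ on $\mathbf{V_k}(p-1)$ to agree with $\chi_0$ on $\mathbf{V_k}(p_0-1)$ and to colour the remaining vectors arbitrarily; only the agreed-upon part will be invoked. The goal reduces to verifying statement~\ref{statement_1} of Corollary~\ref{partition_1} for this $\chi$.

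For the blue side, given $\mathbf{u} \in \mathbf{V_p}(p-1)$ coming from a $p$-subset $X$, the inequality $\pi(\mathbf{u}) \le p - 1 < p$ forces some coordinate of $\mathbf{u}$ to be at least $2$. I select a $p_0$-subset $Y \subseteq X$ that takes two vertices from such a subgraph plus any other $p_0 - 2$ vertices of $X$; then $\pi(\mathbf{v}(Y)) \le p_0 - 1$, so $\mathbf{v}(Y) \in \mathbf{V_{p_0}}(p_0-1)$ and $\mathbf{v}(Y) \le_c \mathbf{u}$ by Corollary~\ref{subset}. Feeding $\mathbf{v}(Y)$ into the base case and chaining by transitivity produces the desired blue $\mathbf{v'} \le_c \mathbf{u}$.

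For the red side, given $\mathbf{u} \in \mathbf{V_q}(p-1)$ from a $q$-subset $X$, I split into cases. If $\pi(\mathbf{u}) \le p_0 - 1$, Remark~\ref{remark2} implies that any $q_0$-subset of $X$ suffices. Otherwise $\pi(\mathbf{u}) \ge p_0$, and I restrict attention to the $p_0 - 1$ subgraphs holding the largest counts in $X$; together these contain at least $p_0 - 1 \ge q_0$ vertices of $X$, so a $q_0$-subset $Y$ can be chosen within them. In either case $\mathbf{v}(Y) \in \mathbf{V_{q_0}}(p_0-1)$ with $\mathbf{v}(Y) \le_c \mathbf{u}$, and the base case plus transitivity supplies the required red $\mathbf{v'} \le_c \mathbf{u}$. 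The hard step is precisely this red direction: unlike the blue one, where the pigeonhole inequality $\pi(\mathbf{u}) < p$ automatically produces a ``doubled'' subgraph, the red case has no such built-in reduction, and the hypothesis $p_0 \ge q_0 + 1$ is exactly what permits squeezing a $q_0$-subset into only $p_0 - 1$ subgraphs.
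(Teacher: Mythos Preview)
Your argument is correct and, structurally, more direct than the paper's. The paper decomposes the statement into two auxiliary lemmas: first an induction on $p$ with $q=q_0$ fixed (Lemma~\ref{any_p}), then an application of Lemma~\ref{constant} with $d=p-1$ held fixed to let $q$ grow (Lemma~\ref{any_q}). You instead handle arbitrary $p\ge p_0$, $q\ge q_0$ in one shot by explicitly exhibiting, inside any $p$-clique (respectively $q$-clique), a $p_0$-subset (respectively $q_0$-subset) whose primal cardinality vector already lies in $\mathbf{V_{p_0}}(p_0-1)$ (respectively $\mathbf{V_{q_0}}(p_0-1)$), so that the base coloring $\chi_0$ applies directly. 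This avoids the induction entirely and is arguably cleaner.

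Two cosmetic remarks. First, your worry that $\mathbf{V_k}(p_0-1) \subsetneq \mathbf{V_k}(p-1)$ is unfounded: since $p_0, q_0 \ge k+1$ in any non-trivial setting, Remark~\ref{remark3} gives $\mathbf{V_k}(p_0-1) = \mathbf{V_k}(k) = \mathbf{V_k}(p-1)$, so the ``extension by arbitrary colours'' is vacuous and $\chi = \chi_0$. Second, your case split on the red side is unnecessary: any $q_0$-subset $Y\subseteq X$ automatically satisfies $\pi(\mathbf{v}(Y)) \le |Y| = q_0 \le p_0 - 1$, so $\mathbf{v}(Y) \in \mathbf{V_{q_0}}(p_0-1)$ without restricting to the top $p_0-1$ parts. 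This is essentially how the paper dispatches the red side in Lemma~\ref{any_p}, invoking Remark~\ref{remark3} to get $\mathbf{V_{q_0}}(p_0) = \mathbf{V_{q_0}}(p_0-1)$ immediately from $p_0 \ge q_0+1$. Your pigeonhole argument for the blue side, on the other hand, is exactly the content of the paper's inductive step in Lemma~\ref{any_p}, just phrased non-inductively.
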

We give the sketch of the proof here, followed by two lemmas to integrate the formal proof.
\begin{proof}[Proof sketch of Lemma~\ref{dp}]
    The proof contains two parts.
    First, we need to show that
    \textsf{Pasting}$(k,p_0,q_0,p_0-1)$ is successful
    implies that
    $\forall p \ge p_0$, \textsf{Pasting}$(k,p,q_0,p-1)$ is successful.
    Then we prove that for arbitrary fixed $p$,
    $\forall q \ge q_0$, \textsf{Pasting}$(k,p,q,p-1)$ is successful.
    Combining both of these we can conclude the proof.
 \end{proof}
\begin{lemma}{\label{any_p}}
    Given $p_0 \ge q_0 + 1$, if \emph{\textsf{Pasting}}$(k,p_0,q_0,p_0-1)$ is successful, then
    $\forall p \ge p_0$, \emph{\textsf{Pasting}}$(k,p,q_0,p-1)$ is successful.
\end{lemma}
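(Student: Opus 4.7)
The plan is to apply Corollary~\ref{partition_1}, which converts \textsf{Pasting}$(k,p,q_0,p-1)$ being successful into the existence of a symmetric coloring $\chi'$ on $\mathbf{V_k}(p-1)$ satisfying the two dominance conditions in Statement~\ref{statement_1}. Given a coloring $\chi_0$ witnessing \textsf{Pasting}$(k,p_0,q_0,p_0-1)$, I would define $\chi'(\mathbf{u}) \coloneqq \chi_0(\mathbf{u})$ for every $\mathbf{u} \in \mathbf{V_k}(p_0-1)$ and assign an arbitrary color to each remaining vector in $\mathbf{V_k}(p-1) \setminus \mathbf{V_k}(p_0-1)$. It then suffices to verify the two dominance conditions for $\chi'$, after which the conclusion follows directly from the corollary.

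The red condition is essentially free. Since $p \ge p_0 \ge q_0 + 1$ gives $p-1 \ge p_0 - 1 \ge q_0$, Remark~\ref{remark3} yields $\mathbf{V_{q_0}}(p-1) = \mathbf{V_{q_0}}(p_0-1)$, so every target $\mathbf{v}$ for the new red condition is already covered by $\chi_0$'s witness; that witness lies in $\mathbf{V_k}(p_0-1) \subseteq \mathbf{V_k}(p-1)$ and retains its red color under $\chi'$.

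The blue condition is where the real work lies. For an arbitrary $\mathbf{v} \in \mathbf{V_p}(p-1)$, my plan is to produce an intermediate vector $\mathbf{w} \in \mathbf{V_{p_0}}(p_0-1)$ with $\mathbf{w} \le_c \mathbf{v}$; the hypothesis on $\chi_0$ then supplies a blue $\mathbf{u} \in \mathbf{V_k}(p_0-1)$ with $\mathbf{u} \le_c \mathbf{w}$, and transitivity of $\le_c$ together with $\mathbf{V_k}(p_0-1) \subseteq \mathbf{V_k}(p-1)$ closes the step, because $\chi'(\mathbf{u}) = \chi_0(\mathbf{u}) = \text{blue}$. To construct $\mathbf{w}$, Corollary~\ref{subset} lets me realize $\mathbf{v}$ as $\mathbf{v}(X)$ for some $p$-subset $X \subseteq \bigcup_{i \in [p-1]} V_i$; since $\pi(\mathbf{v}) \le p-1 < p$, the pigeonhole principle forces two elements of $X$ to share a common part $V_i$, so any $p_0$-subset $Y \subseteq X$ containing both of them (which exists since $p_0 \le p$) spans at most $p_0 - 1$ distinct parts. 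Hence $\mathbf{v}(Y) \in \mathbf{V_{p_0}}(p_0-1)$, and Corollary~\ref{subset} gives $\mathbf{v}(Y) \le_c \mathbf{v}$, so $\mathbf{w} \coloneqq \mathbf{v}(Y)$ works. Invoking Corollary~\ref{partition_1} once more concludes that $\textsf{Pasting}(k,p,q_0,p-1)$ is successful.
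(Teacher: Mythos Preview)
Your proposal is correct. Both you and the paper reduce via Corollary~\ref{partition_1} to finding, for every $\mathbf{v} \in \mathbf{V_p}(p-1)$, an intermediate $\mathbf{w} \in \mathbf{V_{p_0}}(p_0-1)$ with $\mathbf{w} \le_c \mathbf{v}$, and both handle the red side identically via Remark~\ref{remark3}. The difference is in how that intermediate vector is produced. The paper proceeds by induction on $p$, treating only the step $p_0 \to p_0+1$: it splits $\mathbf{V_{p_0+1}}(p_0) = \mathbf{V_{p_0+1}}(p_0-1) \cup \{\mathbf{1}^{p_0}+\mathbf{e_1}\}$ and in each case explicitly subtracts a coordinate to drop into $\mathbf{V_{p_0}}(p_0-1)$. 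You instead give a direct, non-inductive argument for arbitrary $p$: realize $\mathbf{v}$ as $\mathbf{v}(X)$, use pigeonhole on $\pi(\mathbf{v}) \le p-1$ to find two elements sharing a part, and take any $p_0$-subset $Y$ containing them so that $\pi(\mathbf{v}(Y)) \le p_0-1$. Your route is arguably cleaner since it avoids the induction and the case split; the paper's route is slightly more self-contained because it manipulates vectors directly rather than passing through a set realization. One small remark: your extension of $\chi_0$ to $\mathbf{V_k}(p-1)\setminus\mathbf{V_k}(p_0-1)$ is vacuous, since $p_0-1 \ge k$ forces $\mathbf{V_k}(p-1)=\mathbf{V_k}(p_0-1)=\mathbf{V_k}(k)$ by Remark~\ref{remark3} (the paper notes this explicitly), so $\chi' = \chi_0$ already.
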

\begin{proof}
    By Corollary~\ref{partition_1}, if \textsf{Pasting}$(k,p_0,q_0,p_0-1)$ is successful, we have that $\exists \chi_0$ such that the following two statements hold:
    \begin{align}
        & \forall \mathbf{v} \in \mathbf{V_{p_0}}(p_0 - 1)~\exists \mathbf{v'} \in \mathbf{V_k}(p_0 - 1),~\mathbf{v'} \le_c \mathbf{v} \wedge \chi_0(\mathbf{v'})= \text{blue} \label{p1}. \\
        & \forall \mathbf{v} \in \mathbf{V_{q_0}}(p_0 - 1)~\exists \mathbf{v'} \in \mathbf{V_k}(p_0 - 1),~\mathbf{v'} \le_c \mathbf{v} \wedge \chi_0(\mathbf{v'})= \text{red} \label{q1}.
    \end{align}

    By induction on $p$, it remains to prove the inductive step:
    \textsf{Pasting}$(k,p_0 + 1,q_0,p_0)$ is successful, which is equivalent to that $\exists \chi_1$ such that the following two statements hold:
    \begin{align}
        & \forall \mathbf{v} \in \mathbf{V_{p_0 + 1}}(p_0)~\exists \mathbf{v'} \in \mathbf{V_k}(p_0),~\mathbf{v'} \le_c \mathbf{v} \wedge \chi_1(\mathbf{v'})= \text{blue} \label{p2}.\\
        & \forall \mathbf{v} \in \mathbf{V_{q_0}}(p_0)~\exists \mathbf{v'} \in \mathbf{V_k}(p_0),~\mathbf{v'} \le_c \mathbf{v} \wedge \chi_1(\mathbf{v'})= \text{red} \label{q2}.
    \end{align}

    We prove that any $\chi_0$ satisfies (\ref{p1}) and (\ref{q1}) also satisfies (\ref{p2}) and (\ref{q2}). First we prove that (\ref{p1}) implies (\ref{p2}), then we prove that (\ref{q1}) implies (\ref{q2}).
    Noticing that $p_0, q_0 \ge k+1$, otherwise the hypergraph is trivial. So by Remark~\ref{remark3} we know that $\mathbf{V_k}(p_0 - 1) = \mathbf{V_k}(p_0) = \mathbf{V_k}(k)$.

    For the first implication, by Definition~\ref{pcv_set},
    $\mathbf{V_{p_0 + 1}}(p_0) = \mathbf{V_{p_0 + 1}}(p_0 - 1) \bigcup \mathbf{V'}$ where $\mathbf{V'} = \{\mathbf{v} \mid \pi(\mathbf{v}) = p_0,~\sum_{i \in [\pi(\mathbf{v})]} v_i = p_0 + 1\} = \mathbf{1}^{p_0} + \mathbf{e_1}$.
    \footnote{Conventionally, $\mathbf{1}^n$ is a vector of length $n$ with all coordinates being 1; $\mathbf{e_i}$ is a vector with the $i$-th coordinate being $1$ and others being $0$.}
    Thus $\forall \mathbf{v} \in \mathbf{V_{p_0 + 1}}(p_0)$, there are two cases:
    (\romannumeral1) if $\mathbf{v} \in \mathbf{V_{p_0 + 1}}(p_0 - 1)$, then let $\mathbf{u} \coloneqq \mathbf{v} - \mathbf{e_{\pi(\mathbf{v})}}$;
    (\romannumeral2) else if $\mathbf{v} = \mathbf{1}^{p_0} + \mathbf{e_1}$, let $\mathbf{u} \coloneqq \mathbf{1}^{p_0 - 1} + \mathbf{e_1}$.
    In either case we have $\mathbf{u} \in \mathbf{V_{p_0}}(p_0 - 1)$ and $\mathbf{u} \le_c \mathbf{v}$.
    Also, by (\ref{p1}) we know that $\exists \mathbf{v'} \in \mathbf{V_k}(k)$ such that $\mathbf{v'} \le_c \mathbf{u}$ and $\chi_0(\mathbf{v'})= \text{blue}$, so by transitivity $\mathbf{v'} \le_c \mathbf{u} \le_c \mathbf{v}$, we have that $\chi_0$ satisfies (\ref{p2}).
    For the second implication, since $p_0 \ge q_0 + 1$, by Remark~\ref{remark3} we have $\mathbf{V_{q_0}}(p_0) = \mathbf{V_{q_0}}(p_0 - 1) = \mathbf{V_{q_0}}(q_0)$, then (\ref{q1}) is equivalent to (\ref{q2}).
    %If $p_0 = q_0$, we have $\mathbf{V_{q_0}}(p_0) = \mathbf{V_{q_0}}(q_0 - 1) \bigcup \mathbf{V'}$ where $\mathbf{V'} = \{\mathbf{v} | \pi(\mathbf{v}) = q_0,\sum_i^{\pi(\mathbf{v})} v_i = q_0\} = \mathbf{1}^{q_0}$.
    %Thus $\forall \mathbf{v} \in \mathbf{V_{q_0}}(p_0)$, there are two cases:
    %i) if $\mathbf{v} \in \mathbf{V_{q_0}}(q_0 - 1)$, let $\mathbf{u} = \mathbf{v}$, then by (\ref{q1}) $\exists \mathbf{v'} \in \mathbf{V_k}(k)$ such that $\mathbf{v'} \le_c \mathbf{u}$ and $\chi_0(\mathbf{v'})= red$;
    %ii) else if $\mathbf{v} = \mathbf{1}^{q_0}$, we set $\chi_1 = \chi_0 \bigcup \{\chi(\mathbf{1}^{q_0}) = red\}$. Note that $\mathbf{1}^{q_0} \notin \mathbf{V_{p_0}}(p_0 - 1) \bigcup \mathbf{V_{q_0}}(p_0 - 1)$, thus $\chi(\mathbf{1}^{q_0}) = red \notin \chi_0$.

    As a result, $\chi_1$ satisfies (\ref{p2}) and (\ref{q2}), by which we finish the induction and conclude the proof.
 \end{proof}
\begin{lemma}{\label{any_q}}
    Given integers $p, q_0$, if \emph{\textsf{Pasting}}$(k,p,q_0,p-1)$ is successful, %for some $p_0,q_0 \ge k+1$,
    then $\forall q \ge q_0$, \emph{\textsf{Pasting}}$(k,p,q,p-1)$ is successful.
\end{lemma}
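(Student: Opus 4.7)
The plan is to show that the very coloring $\chi_0$ witnessing \textsf{Pasting}$(k,p,q_0,p-1)$ already witnesses \textsf{Pasting}$(k,p,q,p-1)$ for every $q \ge q_0$, without any modification. Translating via Corollary~\ref{partition_1}, I would re-express the hypothesis as: $\chi_0$ is a coloring for which (a) every $\mathbf{v} \in \mathbf{V_p}(p-1)$ dominates (in the $\le_c$ order) some blue $\mathbf{v'} \in \mathbf{V_k}(p-1)$, and (b) every $\mathbf{v} \in \mathbf{V_{q_0}}(p-1)$ dominates some red $\mathbf{v'} \in \mathbf{V_k}(p-1)$. The target conclusion replaces (b) by the analogous statement over $\mathbf{V_q}(p-1)$, while (a) is unchanged (the parameter $p$ does not move). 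Hence it suffices to upgrade (b) from $q_0$ to arbitrary $q \ge q_0$.

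To upgrade (b), I would take an arbitrary $\mathbf{u} \in \mathbf{V_q}(p-1)$ and realize it as $\mathbf{v}(X)$ for some $q$-subset $X \subseteq \bigcup_{i \in [p-1]} V_i$, whose existence is guaranteed by Corollary~\ref{subset}. Since $q \ge q_0$, pick any $q_0$-subset $Y \subseteq X$; by Corollary~\ref{subset} again $\mathbf{v}(Y) \le_c \mathbf{u}$, and Remark~\ref{remark2} ensures $\pi(\mathbf{v}(Y)) \le \pi(\mathbf{u}) \le p-1$ with $\sum_{i} v_i(Y) = q_0$, so $\mathbf{v}(Y) \in \mathbf{V_{q_0}}(p-1)$. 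Applying the hypothesis to $\mathbf{v}(Y)$ yields a red $\mathbf{v'} \in \mathbf{V_k}(p-1)$ with $\mathbf{v'} \le_c \mathbf{v}(Y)$, and transitivity of $\le_c$ gives $\mathbf{v'} \le_c \mathbf{u}$, as required. Invoking Corollary~\ref{partition_1} in the reverse direction closes the argument.

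The combinatorial intuition, which I might record in a sentence, is that any $q$-clique trivially contains a $q_0$-sub-clique, and a red $k$-edge inside the sub-clique is automatically a red $k$-edge of the larger clique. I do not anticipate any real obstacle: the argument is a clean application of Corollary~\ref{subset} and the $\le_c$ framework. In contrast to Lemma~\ref{any_p}, there is no new primal cardinality vector to handle separately (such as the $\mathbf{1}^{p_0}+\mathbf{e_1}$ case there), because the length bound $d = p-1$ stays fixed as $q$ grows, so $\mathbf{V_{q_0}}(p-1)$ sits inside the downward $\le_c$-closure of $\mathbf{V_q}(p-1)$ without any boundary effect.
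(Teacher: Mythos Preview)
Your argument is correct: the same coloring $\chi_0$ works for every $q \ge q_0$, because any $\mathbf{u} \in \mathbf{V_q}(p-1)$ dominates some $\mathbf{v} \in \mathbf{V_{q_0}}(p-1)$, which in turn dominates a red $\mathbf{v'} \in \mathbf{V_k}(p-1)$ by hypothesis. One small quibble: Corollary~\ref{subset} is phrased in terms of subsets of a fixed $V$ and does not by itself guarantee that an arbitrary $\mathbf{u} \in \mathbf{V_q}(p-1)$ is realized as $\mathbf{v}(X)$ for some $X$; you are implicitly assuming each $V_i$ is large enough to hold $u_1$ elements. This is harmless (one can take auxiliary sets of any size, or simply produce $\mathbf{v} \le_c \mathbf{u}$ by repeatedly decrementing a coordinate as in the proof of Lemma~\ref{constant}), but it would be cleaner to state the purely vector-level fact directly rather than routing through a concrete $X$.

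The paper takes a shorter route: it observes that Lemma~\ref{any_q} is literally the special case $d = p-1$, $p_0 = p$ of Lemma~\ref{constant}, and invokes that lemma in one line. Your proof is not wrong, but it re-derives from scratch the $q$-monotonicity half of Lemma~\ref{constant}; since that lemma is already available, the paper simply cites it. What your write-up buys is self-containment and an explicit reminder of why the $p$-side needs no work (the parameter $p$ is fixed), whereas the paper's version buys brevity and avoids repetition of the transitivity argument.
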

\begin{proof}
    Since $p$ is fixed, by Lemma~\ref{constant} with $d=p-1$, we have that \textsf{Pasting}$(k,p',q_0,p-1)$ is successful for any $p' \ge p, q \ge q_0$. In particular, the conclusion holds for $p' = p$ and any $q \ge q_0$
 \end{proof}

By the proof sketch of Lemma~\ref{dp}, with Lemma~\ref{any_p} and \ref{any_q} we finish the proof of Lemma~\ref{dp}.

\section{Recurrences for Small $\mathbf{k}$}\label{smallk}
In this section, we give our main results on recurrence relations for small $k$, followed by their proofs and the relation to the satisfiability problem.
\subsection{Main Results on Small $k$}
\begin{theorem}{\label{k4}}
    For any integer $p \ge 6$ and $q \ge 5$, $r_4(p,q) \ge 2 r_4(p-1,q) - 1$ holds.
    Furthermore, if $q \ge 7$ then $r_4(p,q) \ge (p-1) \cdot (r_4(p-1,q)-1) + 1$ holds.
\end{theorem}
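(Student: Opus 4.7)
The plan is to reduce both inequalities to verifying that \textsf{Pasting} succeeds on a single fixed base case, and then propagate using the lifting lemmas of Section~\ref{forms}: Lemma~\ref{constant} with $d=2$ delivers the first inequality, and Lemma~\ref{dp} with $d=p_0-1$ delivers the second. Because $k=4$, the coloring $\chi'$ to be constructed is specified on at most four primal cardinality vectors, $(3,1),(2,2),(2,1,1),(1,1,1,1)$, since $(4)$ corresponds to an edge entirely inside some $V_i$ and is already fixed by the input coloring $\chi_i$. Checking any candidate $\chi'$ then amounts, via Corollary~\ref{partition_1}, to verifying that every $\mathbf{v}\in\mathbf{V_{p_0}}(d)$ dominates (in $\le_c$) some $\chi'$-blue vector in $\mathbf{V_4}(d)$ and symmetrically every $\mathbf{v}\in\mathbf{V_{q_0}}(d)$ dominates some $\chi'$-red vector, with the $\pi(\mathbf{v})=1$ corner cases absorbed automatically by the $(p_0-1,q_0;4)$-property of $\chi_i$ on sufficiently large single-side subsets.

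For the first inequality I take $(p_0,q_0)=(6,5)$ with $d=2$, so only $(3,1)$ and $(2,2)$ receive a $\chi'$-color. The candidate $\chi'((3,1))=\text{red}$, $\chi'((2,2))=\text{blue}$ works: each length-$2$ vector in $\mathbf{V_6}(2)$ either dominates $(2,2)$ or has $v_1\ge 5$, in which case the corresponding $5$-subset of some $V_i$ contains a blue $4$-edge by the $(5,5;4)$-property of $\chi_i$; the red case for $\mathbf{V_5}(2)$ is symmetric via $(3,1)$. Lemma~\ref{constant} then lifts this single base case to every $p\ge 6$ and $q\ge 5$.

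For the second inequality the hypothesis $p_0\ge q_0+1$ of Lemma~\ref{dp} together with $q\ge 7$ points to the base case $(p_0,q_0)=(8,7)$ with $d=7$. The candidate I propose is $\chi'((3,1))=\text{red}$, $\chi'((2,2))=\text{blue}$, $\chi'((2,1,1))=\text{blue}$, $\chi'((1,1,1,1))=\text{red}$, and each of these four assignments is in fact forced by an extremal vector: $(1,1,1,1,1,1,1)\in\mathbf{V_7}(7)$ forces $(1,1,1,1)$ red, $(2,1,1,1,1,1,1)\in\mathbf{V_8}(7)$ then forces $(2,1,1)$ blue, $(6,1)\in\mathbf{V_7}(7)$ forces $(3,1)$ red, and $(4,4)\in\mathbf{V_8}(7)$ forces $(2,2)$ blue. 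Verification reduces to the combinatorial observation that every $\mathbf{v}\in\mathbf{V_8}(7)$ with $v_1\le 6$ dominates $(2,2)$ or $(2,1,1)$, while every $\mathbf{v}\in\mathbf{V_7}(7)$ with $v_1\le 6$ dominates $(3,1)$ or $(1,1,1,1)$; the $v_1\ge 7$ cases are absorbed by $\chi_i$. Lemma~\ref{dp} then extends the bound to all $p\ge 8$, $q\ge 7$.

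The main obstacle is precisely this search for a feasible base coloring. Smaller attempts such as $(p_0,q_0,d)=(6,5,5)$ or $(7,6,6)$ fail: for instance, in $(7,6,6)$ the vectors $(2,1,1,1,1,1)\in\mathbf{V_7}(6)$ and $(5,2)\in\mathbf{V_7}(6)$ force $(2,1,1)$ and $(2,2)$ both blue, whereas $(2,2,2)\in\mathbf{V_6}(6)$ has only $\{(2,2),(2,1,1)\}$ as its sub-$4$-vectors and therefore requires one of them red, producing a contradiction. Showing that these obstructions vanish precisely at $(8,7,7)$, together with the enumerative verification over the (larger) set $\mathbf{V_8}(7)\cup\mathbf{V_7}(7)$, is the technically delicate part of the argument; at the boundary, transforming the constraint system into CNF and solving with a SAT solver, as flagged in Section~\ref{intro}, is a natural fallback.
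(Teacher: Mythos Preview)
Your approach is essentially the paper's: the same two colorings
\[
\chi'(3,1)=\text{red},\quad \chi'(2,2)=\text{blue}
\]
for $d=2$, and
\[
\chi'(3,1)=\chi'(1,1,1,1)=\text{red},\quad \chi'(2,2)=\chi'(2,1,1)=\text{blue}
\]
for the $(p-1)$-recurrence, together with Lemma~\ref{constant} and Lemma~\ref{dp}. The first inequality is handled correctly.

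For the second inequality, however, there is a genuine gap. You correctly observe that the hypothesis $p_0\ge q_0+1$ of Lemma~\ref{dp} forces $p_0\ge 8$ when $q_0=7$, and you verify that \textsf{Pasting}$(4,8,7,7)$ succeeds. But Lemma~\ref{dp} then yields the recurrence only for $p\ge 8$, whereas the theorem claims it for all $p\ge 6$. Your argument says nothing about $p=6$ and $p=7$. Your discussion of why $(6,5,5)$ and $(7,6,6)$ fail is beside the point: those are attempts with $q_0<7$, outside the claimed range. What you must check instead is that \textsf{Pasting}$(4,6,7,5)$ and \textsf{Pasting}$(4,7,7,6)$ succeed --- and they do, with the \emph{same} coloring $\chi'$. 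For instance, every $\mathbf{v}\in\mathbf{P_6}(5)$ has $2\le v_1\le 4$, hence dominates $(2,2)$ or $(2,1,1)$; every $\mathbf{v}\in\mathbf{Q_7}(5)$ has $v_1\ge 3$ (hence $\ge_c(3,1)$) or $\pi(\mathbf{v})\ge 4$ (hence $\ge_c(1,1,1,1)$), since $v_1\le 2$ and $\pi(\mathbf{v})\le 3$ would give $\sum v_i\le 6<7$. The $(7,7,6)$ case is identical. Once these two base cases are in hand, Lemma~\ref{any_q} (i.e.\ Lemma~\ref{constant} with $d$ fixed at $5$ and $6$ respectively) extends each to all $q\ge 7$, and together with your $(8,7)$ base and Lemma~\ref{dp} this covers the full range $p\ge 6$, $q\ge 7$. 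This is exactly how the paper closes the argument.
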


\begin{theorem}{\label{k567_p-1}}
    There exists a constant $c \ge 25$,
    such that given integer $k \ge 5$ and $k \le c$, for any integer $p \ge k+2$ and $q \ge k+2$, $r_k(p,q) \ge (p-1) \cdot (r_k(p-1,q)-1) + 1$ holds.
\end{theorem}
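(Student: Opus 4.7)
The plan is to reuse the \textsf{Pasting} framework developed in \S{\ref{preliminaries}}--\S{\ref{forms}} and reduce the theorem, for each fixed $k$, to the verification of a constant number of base-case pastings. Propagation to all larger $p,q$ is then handled by the lemmas of \S{\ref{forms}}, so no new structural machinery is needed; the work is concentrated entirely in the base cases.

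Concretely, I would split the target region $\{(p,q): p\ge k+2, q\ge k+2\}$ into two pieces. For $p\ge k+3$, I apply Lemma~\ref{dp} with $(p_0,q_0)=(k+3,k+2)$: since $p_0\ge q_0+1$, a successful \textsf{Pasting}$(k,k+3,k+2,k+2)$ immediately delivers the recurrence $r_k(p,q)\ge (p-1)(r_k(p-1,q)-1)+1$ for every $p\ge k+3$ and $q\ge k+2$. For the boundary slice $p=k+2$, I fix $p$ and invoke Lemma~\ref{any_q} with $q_0=k+2$: a successful \textsf{Pasting}$(k,k+2,k+2,k+1)$ yields the recurrence for $p=k+2$ and all $q\ge k+2$. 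Thus, for each $k$ with $5\le k\le c$, exactly two base cases need to be certified, and together they cover the full claimed region.

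Each base case is discharged via Corollary~\ref{partition_1}, which recasts \textsf{Pasting}$(k,p_0,q_0,d)$ as the purely combinatorial problem of finding a $\{\text{red},\text{blue}\}$-coloring $\chi$ of $\mathbf{V_k}(d)$ such that (i) every $\mathbf{v}\in\mathbf{V_{p_0}}(d)$ dominates, in the $\le_c$ order, some $\mathbf{v'}\in\mathbf{V_k}(d)$ with $\chi(\mathbf{v'})=\text{blue}$, and (ii) every $\mathbf{v}\in\mathbf{V_{q_0}}(d)$ dominates some $\mathbf{v'}\in\mathbf{V_k}(d)$ with $\chi(\mathbf{v'})=\text{red}$. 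Since $|\mathbf{V_k}(d)|$ equals the number of partitions of $k$ into at most $d$ positive parts, this is a small CNF: one Boolean variable $x_{\mathbf{v'}}$ per element of $\mathbf{V_k}(d)$, one clause $\bigvee_{\mathbf{v'}\le_c \mathbf{v}} x_{\mathbf{v'}}$ per $\mathbf{v}\in\mathbf{V_{p_0}}(d)$, and one clause $\bigvee_{\mathbf{v'}\le_c \mathbf{v}} \overline{x_{\mathbf{v'}}}$ per $\mathbf{v}\in\mathbf{V_{q_0}}(d)$. I would hand this CNF to a modern SAT solver; a satisfying assignment certifies the corresponding pasting.

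The principal obstacle, and the reason the theorem is stated with an unspecified constant $c$, is that both the number of variables and the number of clauses are controlled by partition counts that grow superpolynomially in $k$, so the CNF eventually exceeds the practical reach of SAT solvers. Establishing $c\ge 25$ is therefore the computational core of the proof: one runs the solver for each $k\in\{5,6,\ldots,25\}$ on both CNFs above and checks that all $2\cdot 21$ instances are satisfiable. A subtle technical point worth flagging is the boundary case $p_0=q_0=k+2$, where each $\mathbf{v}\in\mathbf{V_{k+2}}(k+1)$ must simultaneously dominate a blue and a red element of $\mathbf{V_k}(k+1)$; I would verify this is jointly satisfiable (by producing the witness coloring) before declaring the two-step argument complete.
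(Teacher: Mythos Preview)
Your plan matches the paper's proof exactly: the same two base cases, \textsf{Pasting}$(k,k+2,k+2,k+1)$ and \textsf{Pasting}$(k,k+3,k+2,k+2)$, are certified by SAT for each $5\le k\le 25$, and then Lemmas~\ref{any_q} and~\ref{dp} carry the recurrence to all $p\ge k+2$, $q\ge k+2$.

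There is one small but genuine glitch in your CNF encoding. You quantify the clauses over $\mathbf{V_{p_0}}(d)$ and $\mathbf{V_{q_0}}(d)$, but the single-part vector $(p_0)\in\mathbf{V_{p_0}}(d)$ is dominated in $\le_c$ only by $(k)\in\mathbf{V_k}(d)$, so its clause is the unit clause $x_{(k)}$; symmetrically $(q_0)\in\mathbf{V_{q_0}}(d)$ yields the unit clause $\neg x_{(k)}$, and your formula is unsatisfiable as written. The paper avoids this by building the CNF from Lemma~\ref{partition_2} rather than Corollary~\ref{partition_1}, replacing $\mathbf{V_{p_0}},\mathbf{V_{q_0}}$ with the pruned sets $\mathbf{P_{p_0}},\mathbf{Q_{q_0}}$ (dropping vectors with $v_1\ge p_0-1$, resp.\ $v_1\ge q_0$); those top-heavy configurations are handled automatically by the internal $(p-1,q;k)$-colorings $\chi_i$ on each $V_i$ and need no clause. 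With that correction your argument is identical to the paper's.
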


\begin{theorem}{\label{k_p-1}}
    There exists a constant $c \ge 25$,
    such that given integer $k \neq 9$ and $8 \le k \le c$,
    for any integer $p \ge k + 2$ and $q \ge k + 1$, $r_k(p,q) \ge (p-1) \cdot (r_k(p-1,q)-1) + 1$ holds.
\end{theorem}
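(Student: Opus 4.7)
The plan is to apply Lemma~\ref{dp} with the base-case parameters $p_0 = k+2$ and $q_0 = k+1$; the hypothesis $p_0 \ge q_0 + 1$ holds with equality. This reduces the theorem, for each fixed $k$, to showing that \textsf{Pasting}$(k, k+2, k+1, k+1)$ is successful. By Corollary~\ref{partition_1}, success is equivalent to the existence of a coloring $\chi$ of the primal cardinality vectors $\mathbf{V_k}(k+1)$ such that every $\mathbf{w} \in \mathbf{V_{k+2}}(k+1)$ has some $\mathbf{v'} \le_c \mathbf{w}$ with $\mathbf{v'} \in \mathbf{V_k}(k+1)$ and $\chi(\mathbf{v'}) = \text{blue}$, and every $\mathbf{w} \in \mathbf{V_{k+1}}(k+1)$ has some $\mathbf{v'} \le_c \mathbf{w}$ with $\chi(\mathbf{v'}) = \text{red}$. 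Thus the theorem is reduced to one finite combinatorial existence question per value of $k$.

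Next, I would enumerate the three relevant vector sets. By Definition~\ref{pcv_set}, $\mathbf{V_s}(k+1)$ consists of non-increasing positive compositions of $s$ into at most $k+1$ parts, so $|\mathbf{V_k}(k+1)|$, $|\mathbf{V_{k+1}}(k+1)|$, and $|\mathbf{V_{k+2}}(k+1)|$ are each bounded by the integer-partition numbers $p(k)$, $p(k+1)$, $p(k+2)$, which remain modest for $k$ in the range under consideration. Following the CNF encoding alluded to in the introduction, I would introduce one Boolean variable $x_{\mathbf{v}}$ for each $\mathbf{v} \in \mathbf{V_k}(k+1)$, interpreted as ``$\chi(\mathbf{v}) = \text{blue}$''. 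For every $\mathbf{w} \in \mathbf{V_{k+2}}(k+1)$ I would emit the positive clause $\bigvee_{\mathbf{v} \le_c \mathbf{w}} x_{\mathbf{v}}$, and for every $\mathbf{w} \in \mathbf{V_{k+1}}(k+1)$ the negative clause $\bigvee_{\mathbf{v} \le_c \mathbf{w}} \neg x_{\mathbf{v}}$. A satisfying assignment is exactly a valid $\chi$, so feeding each instance to a modern SAT solver and collecting its certificate closes the base case for that $k$; Lemma~\ref{dp} then bootstraps the inequality to all $p \ge k+2$ and $q \ge k+1$.

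The main obstacle is twofold. First, as $k$ grows the number of vectors and the width of the clauses (governed by the count of $\le_c$-predecessors) both grow, so at some point the CNF exceeds what a SAT solver can decide in reasonable time; this is precisely the phenomenon that forces the constant $c$ to be finite, and the actual value of $c$ is whatever the largest $k$ is for which a SAT certificate can still be produced. Second, the exclusion $k \neq 9$ must be verified to be intrinsic: I expect that for $k = 9$ the encoded CNF is in fact unsatisfiable, so that the natural base case $p_0 = k+2$, $q_0 = k+1$, $d = p_0 - 1$ genuinely fails and the weaker Theorem~\ref{k567_p-1} must be used instead. Checking this (by running the SAT solver to an UNSAT verdict, or by exhibiting a structural obstruction on $\mathbf{V_{10}}(10)$ and $\mathbf{V_{11}}(10)$) is what I would treat as the genuine content beyond routine encoding, and it is what justifies the hypothesis $k \neq 9$ in the theorem statement.
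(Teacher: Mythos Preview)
Your plan is the paper's plan: reduce via Lemma~\ref{dp} with $p_0=k+2$, $q_0=k+1$, $d=k+1$ to a single base case per $k$, encode that base case as a CNF, run a SAT solver for each $8\le k\le 25$, and record a satisfying assignment for every such $k$ except $k=9$, where the solver returns UNSAT. That is exactly what the paper does, and your expectation about $k=9$ is confirmed there.

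There is, however, one concrete flaw in your encoding that would make it fail as written. You build clauses over all of $\mathbf{V_{k+2}}(k+1)$ and $\mathbf{V_{k+1}}(k+1)$, invoking Corollary~\ref{partition_1}. But $\mathbf{V_{k+2}}(k+1)$ contains the vector $(k+2)$, whose only $\le_c$-predecessor in $\mathbf{V_k}$ is $(k)$, so you emit the unit clause $x_{(k)}$; and $\mathbf{V_{k+1}}(k+1)$ contains $(k+1)$, whose only predecessor is again $(k)$, so you also emit $\neg x_{(k)}$. Your CNF is therefore unsatisfiable for \emph{every} $k$, not just $k=9$. The reason is that edges with primal cardinality vector $(k)$ lie entirely inside some $V_i$ and are coloured by the inner $(p-1,q;k)$-coloring $\chi_i$, not by the outer colouring $\chi'$ you are free to choose; $x_{(k)}$ is not a free variable at all, and clique-vectors with $v_1\ge p-1$ (on the blue side) or $v_1\ge q$ (on the red side) are already handled automatically by the inner colourings. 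This is precisely the content of Lemma~\ref{partition_2}, and the paper's CNF procedure accordingly ranges over $\mathbf{P_{k+2}}(k+1)$ and $\mathbf{Q_{k+1}}(k+1)$ rather than the full $\mathbf{V}$-sets. With that one correction your proposal coincides with the paper's proof.
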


The difference between Theorem~\ref{k567_p-1} and \ref{k_p-1} is the base cases of $q$, which are $k+2$ and $k+1$ respectively. Note that the right-hand side of the recurrence relation in Theorem~\ref{k_p-1} on initial values is $r_k(k+1,k+1)$: the first non-trivial Ramsey number on $k$-hypergraphs.

\subsection{Proof Sketch}
Before proving the above theorems, we take a detour to revisit Corollary~\ref{partition_1}.
We show that Statement \ref{statement_1} in Corollary~\ref{partition_1} can be interpreted in a slightly different way.
%\footnote{We intentionally missed it for the simplicity in proving Lemma~\ref{dp}.}

\begin{lemma}{\label{partition_2}}
    Define $\mathbf{P_p}(d) = \{\mathbf{v} \mid \sum_{i \in [\pi(v)]} v_i = p, \pi(\mathbf{v}) \le d, v_1 \le p-2 \}$.
    Define $\mathbf{Q_q}(d) = \{\mathbf{v} \mid \sum_{i \in [\pi(v)]} v_i = q, \pi(\mathbf{v}) \le d, v_1 \le q-1 \}$.
    Given integers $p$, $q$, $k$, $d$, $V = \bigcup_{i \in [d]} V_i$, and $\mathbf{G} = V^{(k)}$ as before, the following two statements are equivalent:
    \begin{enumerate}
        \item $\exists \chi$ such that $\forall \mathbf{v} \in \mathbf{P_p}(d)$, $\exists \mathbf{v'} \in \mathbf{V_k}(d)$ such that $\mathbf{v'} \le_c \mathbf{v}$ and $\chi(\mathbf{v'}) = \text{blue}$.
            Moreover, $\chi$ also satisfies that $\forall \mathbf{v} \in \mathbf{Q_q}(d)$, $\exists \mathbf{v'} \in \mathbf{V_k}(d)$ such that $\mathbf{v'} \le_c \mathbf{v}$ and $\chi(\mathbf{v'})= \text{red}$. \label{4.1_statement_1}
        \item $\exists \chi$ such that $\forall p$-subset (resp. $q$-subset) $X \subseteq V$, $\exists k$-hyperedge $e$ of $X^{(k)}$ such that $\chi(e)= \text{blue}$ (resp. red).
    \end{enumerate}
\end{lemma}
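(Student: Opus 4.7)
The plan is to leverage Corollary~\ref{partition_1}, which already establishes that Statement~2 is equivalent to an analogous ``Statement~1'' formulated with the unrestricted sets $\mathbf{V_p}(d)$ and $\mathbf{V_q}(d)$. The entire task therefore reduces to showing that this unrestricted version is equivalent to the $\mathbf{P_p}(d)$/$\mathbf{Q_q}(d)$-restricted version stated here. One direction is immediate from $\mathbf{P_p}(d) \subseteq \mathbf{V_p}(d)$ and $\mathbf{Q_q}(d) \subseteq \mathbf{V_q}(d)$: any $\chi$ whose cover condition holds for every vector in the larger set trivially inherits that condition on the smaller subset. So the substance lies in the reverse direction.

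For the reverse direction I would first enumerate the missing vectors. Since a primal cardinality vector has strictly positive coordinates summing to $p$ (resp.\ $q$), the constraint $v_1 \ge p-1$ forces $\mathbf{v} \in \{(p),(p-1,1)\}$, while $v_1 \ge q$ forces $\mathbf{v} = (q)$. The asymmetry between the cut-offs $v_1 \le p-2$ and $v_1 \le q-1$ mirrors the asymmetry of a $(p-1,q;k)$-coloring: it forbids red $(p-1)$-cliques (one size smaller than $p$) but blue $q$-cliques (exactly size $q$).

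Next I would dispose of each missing vector by invoking the clique-freeness of the fixed colorings $\chi_i$ on $V_i^{(k)}$ supplied by the Pasting setup. For a $p$-subset $X$ with $\mathbf{v}(X) = (p)$ or $(p-1,1)$, some $V_i$ contains a $(p-1)$-subset $X' \subseteq X$; since $\chi_i$ admits no red $(p-1)$-clique, $X'$ contains a blue $k$-edge, which also lies in $X^{(k)}$. The primal cardinality vector of that edge is $(k) \in \mathbf{V_k}(d)$ with $(k) \le_c \mathbf{v}(X)$, which is exactly the cover condition. The case $\mathbf{v}(X) = (q)$ for a $q$-subset is handled symmetrically using the absence of a blue $q$-clique in $V_i$. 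Together with the guarantees of Statement~1 on $\mathbf{P_p}(d)$ and $\mathbf{Q_q}(d)$, this covers every vector in $\mathbf{V_p}(d)$ and $\mathbf{V_q}(d)$, which by Corollary~\ref{partition_1} yields Statement~2.

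The main subtlety — more than a genuine obstacle — will be making the implicit Pasting setup explicit: Statement~1 is phrased as an assertion about a single coloring $\chi$ on $\mathbf{G}$, but the argument succeeds only because $\chi$ restricted to each $V_i^{(k)}$ coincides with the fixed $(p-1,q;k)$-coloring $\chi_i$. Once this is spelled out, the rest is a short case analysis on the three ``large-$v_1$'' vectors, each discharged by a single invocation of the corresponding clique-freeness property of $\chi_i$, and the translation between edges and primal cardinality vectors provided by Corollary~\ref{subset}.
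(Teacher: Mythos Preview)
Your proposal is correct and follows essentially the same approach as the paper: the paper's proof simply observes that any $p$-subset $X$ with $|X\cap V_i|\ge p-1$ for some $i$ already contains a blue edge because $\chi_i$ has no red $(p-1)$-clique, and any $q$-subset meeting some $V_i$ in $\ge q$ vertices contains a red edge because $\chi_i$ has no blue $q$-clique. Your explicit enumeration of the excluded vectors $(p),(p-1,1),(q)$ and the invocation of Corollary~\ref{partition_1} just spell out the same reduction in more detail; your noted ``subtlety'' about $\chi$ restricting to $\chi_i$ on $V_i^{(k)}$ (so that $\chi(\mathbf{v}')$ is not literally well-defined at $\mathbf{v}'=(k)$) is real but, as you say, is only a matter of unpacking the Pasting convention, and the paper sidesteps it by arguing directly at the subset/edge level rather than the vector level.
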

\begin{proof}
    Given $p$-subset $X \subseteq V$, if $\exists i \in [d]$ such that $|X \wedge V_i| \ge p - 1$, $X^{(k)}$ must contain a blue edge, because $\chi_i$ is a $(p-1,q;k)$-coloring on ${V_i}^{(k)}$ and $X^{(k)}$ contains some $(p-1)$-clique, which cannot be a red clique.
    Analogously, any $q$-subset $Y$ intersecting with any $V_i$ on more than $q-1$ vertices necessarily contains a red edge, because any ${V_i}^{(k)}$ has a $(p,q;k)$-coloring.
 \end{proof}

This lemma enables us to consider only a proper subset of the previous primal cardinality vector set, leading to a simpler proof of our theorems.
We give a simple proof of Theorem~\ref{k4}, and we prove Theorem~\ref{k567_p-1} and \ref{k_p-1} in the next subsection.
%The main purpose here is to familiarize our readers with the concepts and technique we use.
\begin{proof}[Proof of Theorem~\ref{k4}]
    Firstly we prove that for any integer $p \ge 6$ and $q \ge 5$, $r_4(p,q) \ge 2 r_4(p-1,q) - 1$.
    By Lemma~\ref{constant}, it is sufficient to prove that $r_4(6,5) \ge 2 (r_4(5,5) - 1) + 1$.
    We give a $(6,5;4)$-coloring as follows:
    \begin{equation*}
        \chi_1^{(4)} = \{\chi(3,1) = \text{red}, \chi(2,2) = \text{blue}\}.
    \end{equation*}
    To prove $\chi_1^{(4)}$ is a $(6,5;4)$-coloring, by  Lemma~\ref{partition_2}, we need to check the following:
    \begin{itemize}
        \item $\forall \mathbf{v} \in \mathbf{P_6}(2), \exists \mathbf{v'} \le_c \mathbf{v}$, such that $\chi_1^{(4)}(\mathbf{v'}) = \text{blue}$.
            This is true because $\mathbf{P_6}(2) = \{(4,2), (3,3)\}$, both $\ge_c (2,2)$.
        \item $\forall \mathbf{v} \in \mathbf{Q_5}(2), \exists \mathbf{v'} \le_c \mathbf{v}$, such that $\chi_1^{(4)}(\mathbf{v'}) = \text{red}$.
            Since $\mathbf{Q_5}(2) = \{(4,1), (3,2)\}$, each of them $\ge_c (3,1)$.
    \end{itemize}
    Thus we proved $r_4(6,5) \ge 2 r_4(5,5) - 1$.

    Now we need to prove that for any integer $p \ge 6$ and $q \ge 7$, $r_4(p,q) \ge (p-1) (r_4(p-1,q)-1) + 1$
    by starting with proving the case of $p=6, q=7$.
    We give a $(6,7;4)$-coloring as following:
    \begin{equation*}
        \chi_2^{(4)} = \{\chi(3,1) = \chi(1,1,1,1) = \text{red}\}  \cup \{\chi(2,2) = \chi(2,1,1) = \text{blue}\} .
    \end{equation*}
    The following needs to be checked:
    \begin{itemize}
        \item $\forall \mathbf{v} \in \mathbf{P_6}(5)$, we have $2 \le v_1 \le 4$, thus either $(2,2) \le_c \mathbf{v}$ or $(2,1,1) \le_c \mathbf{v}$, which are blue.
        \item $\forall \mathbf{v} \in \mathbf{Q_7}(5)$, it must be that either $v_1 \ge 3$ and $2 \le \pi(\mathbf{v}) \le 3$ or $\pi(\mathbf{v}) \ge 4$. The first case $\ge_c (3,1)$ and the second case $\ge_c (1,1,1,1)$, which are both red.
    \end{itemize}
    By the same reasoning, one can show that $\chi_2^{(4)}$ is also a $(7,7;4)$-coloring and an $(8,7;4)$-coloring. Since now the recurrence relation holds for $p=8, q=7$, we can apply Lemma~\ref{dp} to get $\forall p \ge 8, q \ge 7,r_4(p,q) \ge (p-1) \cdot (r_4(p-1,q)-1) + 1$.
    Combining all these cases we proved the theorem.
 \end{proof}

\subsection{Automated Theorem Proving}

The ``$\exists\forall$'' structure of Statement \ref{4.1_statement_1} in Lemma~\ref{partition_2} reminds us of Propositional Logic Satisfiablity (SAT). In fact, a $(p,q;k)$-coloring $\chi$ serves as a certificate of the proof for theorem $r_k(p,q) \ge d\cdot (r_k(p-1,q) - 1) + 1$.
Thus it is nature to use automated theorem proving instead of proving it by hand. As we saw in the proof of Theorem~\ref{k4}, even the simplest case is time-consuming to verify, regardless of how to find that coloring.

\begin{definition}\label{CNF_SAT}
    A Conjunctive Normal Form (CNF) is a conjunction of clauses, such that each clause is a disjunction of literals, where a literal can be positive of negative variable. A satisfying assignment of CNF is a mapping from all variables to \emph{true} or \emph{false} such that every clause has at least one true literal.
    A SAT solver takes a CNF as input and outputs a satisfying assignment or UNSAT if the CNF is unsatisfiable.
\end{definition}

We give the procedure to prove $r_k(p,q) \ge d\cdot (r_k(p-1,q) - 1) + 1$ for fixed $p,q$, then Lemma~\ref{constant} and \ref{dp} can be applied to prove it for arbitrary $p,q$:

%\noindent\rule{\columnwidth}{0.75pt}
%\vspace{-0.45cm}
\begin{enumerate}%[label=\subscript{Step}{\arabic*}]
    \item For every $\mathbf{v} \in \mathbf{P_p}(d)$, construct a clause $C_p(\mathbf{v})$ as follow: For every $\mathbf{u} \in \mathbf{V_k}(d)$, if $\mathbf{u} \le_c \mathbf{v}$, add a positive variable $x(\mathbf{u})$ in $C_p(\mathbf{v})$.
    \item For every $\mathbf{v} \in \mathbf{Q_q}(d)$, construct a clause $C_q(\mathbf{v})$ as follow: For every $\mathbf{u} \in \mathbf{V_k}(d)$, if $\mathbf{u} \le_c \mathbf{v}$, add a negative variable $\neg x(\mathbf{u})$ in $C_q(\mathbf{v})$.
    \item Use SAT solver to solve the constructed CNF:
    \begin{equation*}
        F = \left(\bigcup_{\mathbf{v} \in \mathbf{P_p}(d)} C_p(\mathbf{v})\right) \bigcup \left(\bigcup_{\mathbf{v} \in \mathbf{Q_q}(d)} C_q(\mathbf{v}) \right).
    \end{equation*}
    \item If a satisfying assignment $\alpha$ is found, we construct a $(p,q;k)$-coloring $\chi$ as follows:
        if $\alpha(x(\mathbf{u})) = \text{true}$, set $\chi(\mathbf{u}) \coloneqq \text{blue}$;
        if $\alpha(x(\mathbf{u})) = \text{false}$, set $\chi(\mathbf{u}) \coloneqq \text{red}$.
\end{enumerate}
%\vspace{-0.2cm}
%\noindent\rule{\columnwidth}{0.75pt}

It is easy to show that this procedure is a correct proof when SAT solver returns a satisfying assignment: $\forall \mathbf{v} \in \mathbf{P_p}(d)$, $\exists \mathbf{u} \in \mathbf{V_k}(d)$ such that $\mathbf{u} \le_c \mathbf{v}$ and $\chi(\mathbf{u}) = \text{blue}$, because $\exists x(\mathbf{u}) \in C_p(\mathbf{v})$ such that $x(\mathbf{u}) = \text{true}$;
similarly, $\forall \mathbf{v} \in \mathbf{Q_q}(d)$, $\exists \mathbf{u} \in \mathbf{V_k}(d)$ such that $\mathbf{u} \le_c \mathbf{v}$ and $\chi(\mathbf{u}) = \text{red}$, because $\exists x(\mathbf{u}) \in C_q(\mathbf{v})$ such that $x(\mathbf{u}) = \text{false}$.
So by Lemma~\ref{partition_2} we proved the recurrence relation holds for $p$ and $q$.

%\begin{remark}
%    If $Step_3$ of the procedure above returns UNSAT, then \emph{\textsf{Pasting}}$(k,p,q,d)$ is not successful.
%\end{remark}
\begin{proof}[Proof of Theorem~\ref{k567_p-1} and Theorem~\ref{k_p-1}]
    We use the latest version of SAT solver from \cite{DBLP:conf/aaai/LiuP16} to solve the following two kinds of CNFs:
    \begin{align*}
        F_1 &= \left(\bigcup_{\mathbf{v} \in \mathbf{P_{k+2}}(k+1)} C_p(\mathbf{v})\right) \bigcup \left(\bigcup_{\mathbf{v} \in \mathbf{Q_{k+2}}(k+1)} C_q(\mathbf{v}) \right). \\
        F_2 &= \left(\bigcup_{\mathbf{v} \in \mathbf{P_{k+3}}(k+2)} C_p(\mathbf{v})\right) \bigcup \left(\bigcup_{\mathbf{v} \in \mathbf{Q_{k+2}}(k+2)} C_q(\mathbf{v}) \right).
    \end{align*}

    Our SAT solver returns satisfying assignments on all $5 \le k \le 25$. The satisfying assignment of $F_1$ is a proof for the recurrence relation of case $p = k + 2$ and $q = k + 2$, while that of $F_2$ is a proof for the case $p = k + 3$ and $q = k + 2$. Therefore, by Lemma~\ref{dp} we proved Theorem~\ref{k567_p-1}.

    We do the same for the CNF corresponding to $p=k+2, q=k+1$ on all $8 \le k \le 25$, and get satisfying assignments on all $k$ except for $k = 9$ returning UNSAT, thus (with Lemma~\ref{dp}) proved Theorem~\ref{k_p-1}.
 \end{proof}

Given more time on constructing more CNFs on larger $k$, it is almost sure that lower bound for $c$ in Theorem~\ref{k_p-1} can be improved.
As a result, we give the following conjecture as the $c$-unbounded version of Theorem~\ref{k_p-1}.
\begin{conjecture}{\label{any_k_p-1}}
    Given integer $k \ge 10$,
    for any integer $p \ge k + 2$ and $q \ge k + 1$, $r_k(p,q) \ge (p-1) \cdot (r_k(p-1,q)-1) + 1$.
\end{conjecture}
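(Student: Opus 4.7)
The plan is to reduce the conjecture to a finite combinatorial statement at each $k$ using the machinery already built. By Lemma~\ref{dp}, it suffices to prove for every $k \ge 10$ that \textsf{Pasting}$(k, k+2, k+1, k+1)$ is successful. By Lemma~\ref{partition_2}, this is equivalent to exhibiting, for each such $k$, a coloring $\chi_k$ of the partitions of $k$ (the set $\mathbf{V_k}(k+1) = \mathbf{V_k}(k)$) such that every $\mathbf{v} \in \mathbf{P_{k+2}}(k+1)$ dominates a blue partition under $\le_c$ and every $\mathbf{v} \in \mathbf{Q_{k+1}}(k+1)$ dominates a red partition.

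Concretely, the first step is to examine the SAT-satisfying assignments already computed in the proof of Theorem~\ref{k_p-1} for $10 \le k \le 25$ and try to extract a uniform family of colorings. A natural candidate is a rule depending on a short invariant of $\mathbf{v}$ such as $(v_1, v_2, \pi(\mathbf{v}))$: for example, color $\mathbf{v}$ blue iff $v_1 \le f(k)$ and $v_2 \ge g(k)$, with $f, g$ chosen so that the ``blocky'' partitions near $(k)$ and the ``staircase'' partition $(1, 1, \ldots, 1)$ both fall in the red class. The verification then splits by cases on $v_1$ and $\pi(\mathbf{v})$: for the red check, every $\mathbf{v} \in \mathbf{Q_{k+1}}(k+1)$ has sum $k+1$, so removing a single unit produces a partition of $k$ dominated by $\mathbf{v}$, leaving considerable flexibility; for the blue check, one must remove two units, which forces a slightly tighter argument on the shape of $\mathbf{v}$, especially on extremal vectors such as $(k,2)$ and $(k-1,1,1,1)$.

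The hard part will be explaining why such a uniform scheme must exist for every $k \ge 10$ while the $k = 9$ instance is genuinely unsatisfiable. Because this exceptional failure rules out a purely asymptotic argument, any proof has to be quantitative down to small values of $k$. A complementary strategy is probabilistic: since $|\mathbf{P_{k+2}}(k+1)| + |\mathbf{Q_{k+1}}(k+1)|$ grows only subexponentially in $k$ while each constraint is dominated by a growing number of elements of $\mathbf{V_k}(k+1)$, a suitably biased random coloring, analyzed via the Lov\'asz Local Lemma or a direct union bound, might yield existence without producing an explicit $\chi_k$. In either approach, the core tension is that the blue and red conditions pull in opposite directions on the choice of coloring, and any argument must exploit the partition-lattice structure finely enough to satisfy both simultaneously.
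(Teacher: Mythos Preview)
This statement is labeled a \emph{Conjecture} in the paper, and the paper offers no proof of it; the authors merely report that their SAT formulation succeeds for $10 \le k \le 25$ and fails at $k=9$, and then state the conjecture for all $k \ge 10$. So there is no paper proof to compare against.

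Your reduction via Lemma~\ref{dp} to the single base case \textsf{Pasting}$(k,k+2,k+1,k+1)$ is correct, and the reformulation through Lemma~\ref{partition_2} is also correct. But after that point you do not prove anything: you propose looking for a pattern in the SAT certificates, suggest a candidate form for $\chi_k$ without specifying $f$ or $g$ or verifying either the red or the blue condition, and then float a probabilistic argument without carrying out any estimate. You yourself flag the crux --- ``The hard part will be explaining why such a uniform scheme must exist for every $k \ge 10$ while the $k=9$ instance is genuinely unsatisfiable'' --- and that is exactly the content of the conjecture, left unresolved. Note also that an LLL or union-bound argument, even if it went through, would typically yield existence only for all sufficiently large $k$, not for every $k \ge 10$; the remaining finite range would still require a separate (computer) check, which is precisely what the paper already did up to $k=25$ without settling the general case. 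As written, this is a research plan, not a proof.
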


\section{Recurrences for Arbitrary $\mathbf{k}$}\label{bigk}
In this section, we give two recurrence relations for arbitrary $k$.
%\subsection{Main Results on Arbitrary $k$}
%Our results on arbitrary $k$ are given in the following two theorems.
The recurrence forms align with forms (2) and (3) in \S{\ref{forms}}.
\begin{theorem}\label{two_divide}
    Given even integer $k \ge 4$, for any integers $p \ge k + 2, q \ge k + 1$, $r_k(p, q) \ge 2 \cdot (r_k(p - 1, q) - 1) + 1$ holds.
    Given odd integer $k \ge 5$, for any integers $p \ge k + 2, q \ge k + 2$, the same recurrence relation holds.
\end{theorem}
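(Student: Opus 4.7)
The plan is to apply Lemma~\ref{constant} with $d=2$, which reduces the theorem to showing that \textsf{Pasting}$(k, p_0, q_0, 2)$ is successful at the base point $(p_0, q_0) = (k+2, k+1)$ when $k$ is even and $(p_0, q_0) = (k+2, k+2)$ when $k$ is odd. By Lemma~\ref{partition_2}, this reduces further to exhibiting a single coloring $\chi$ of the type set $\mathbf{V_k}(2)$ such that every $\mathbf{v} \in \mathbf{P_{p_0}}(2)$ contains (under $\le_c$) a blue type and every $\mathbf{v} \in \mathbf{Q_{q_0}}(2)$ contains a red type.

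I would label the types of $\mathbf{V_k}(2)$ as $t_i \coloneqq (i, k-i)$ for $i \in [\lceil k/2 \rceil, k]$, with $t_k$ identified with the length-one vector $(k)$, and take the alternating coloring
\[
  \chi(t_i) = \text{blue} \ \text{if}\ i - \lceil k/2 \rceil \ \text{is even}, \qquad \chi(t_i) = \text{red} \ \text{otherwise.}
\]
The combinatorial feature driving the argument is that any block of two or more consecutive $t_i$'s contains at least one blue and at least one red.

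To verify the construction I would first observe that for any $\mathbf{v} = (a, b)$ with $\pi(\mathbf{v}) = 2$, the subtypes of $\mathbf{v}$ in $\mathbf{V_k}(2)$ form the contiguous block $\{t_i : \max(\lceil k/2 \rceil, k-b) \le i \le \min(k, a)\}$. A brief boundary analysis on $a$ then shows this block has length exactly $2$ or $3$ for every vector of $\mathbf{P_{k+2}}(2)$, for every vector of $\mathbf{Q_{k+1}}(2)$ when $k$ is even, and for every vector of $\mathbf{Q_{k+2}}(2)$ when $k$ is odd (the length-$2$ cases occur only at the minimal value $a = \lceil (k+2)/2 \rceil$ and, in the odd $\mathbf{Q}$ case, at the maximal $a = k+1$). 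Combined with the alternating property above, every $\mathbf{P}$-vector dominates a blue type and every $\mathbf{Q}$-vector dominates a red type, so Lemma~\ref{partition_2} yields the base case and Lemma~\ref{constant} promotes it to all admissible $p, q$.

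The main obstacle is this window-length bookkeeping, and it is precisely what forces the even/odd split in the theorem statement. For odd $k$ with $q = k+1$, the vector $\bigl(\tfrac{k+1}{2}, \tfrac{k+1}{2}\bigr) \in \mathbf{Q_{k+1}}(2)$ has only the single subtype $t_{(k+1)/2}$, which would force that type to be red and destroy the alternation needed on the $\mathbf{P}$-side. Enlarging $q_0$ from $k+1$ to $k+2$ removes this exceptional vector from the $\mathbf{Q}$-constraints and restores the uniform lower bound of two on the window length, which is exactly the property letting the same alternating coloring work in both parities.
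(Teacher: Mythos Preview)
Your proposal is correct, and the route is genuinely different from the paper's. Both arguments begin the same way---invoking Lemma~\ref{constant} to reduce to the single base pair $(p_0,q_0)$ and then Lemma~\ref{partition_2} to reduce to a coloring of $\mathbf{V_k}(2)$---but from there they diverge. The paper proceeds by induction on $k$: it imports the cases $k=4$ and $k=5$ from Theorems~\ref{k4} and~\ref{k567_p-1} (the latter ultimately resting on a SAT-solver certificate) and then propagates the coloring from $k$ to $k+1$ via the shift rule $\chi^{(k+1)}(u_1,u_2)=\chi^{(k)}(u_1-1,u_2)$, with a separate patch for the new middle type when $k+1$ is even. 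You instead write the coloring down in closed form for every $k$ at once (alternating on the chain $t_{\lceil k/2\rceil},\dots,t_k$) and verify it by a uniform window-length count. This buys you a fully self-contained proof that does not lean on the SAT-assisted Theorem~\ref{k567_p-1}; it also makes transparent why the odd/even split in the hypotheses is forced, via the singleton window at $\bigl(\tfrac{k+1}{2},\tfrac{k+1}{2}\bigr)$. One small inaccuracy: your parenthetical says the length-$2$ windows arise only at the extremal values of $a$, but in the even-$k$ case every vector of $\mathbf{Q_{k+1}}(2)$ has window length exactly $2$. This does not affect the argument, since all you need is length at least $2$.
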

\begin{proof}
    By Lemma~\ref{constant}, it is sufficient to prove
    that $r_{k}(k+2, k+1) \ge 2 r_k(k+1, k+1) - 1$ holds for even $k \ge 4$
    and
    $r_{k}(k+2, k+2) \ge 2 r_k(k+1, k+2) - 1$ holds for odd $k \ge 5$.
    The rest of the proof is an induction on $k$.
    For $k = 4$, by Theorem~\ref{k4}, the recurrence relation holds.
    The case of $k = 5$ is implied by Theorem~\ref{k567_p-1}. Assuming the recurrence holds for $k$, we prove the inductive step on $k+1$.

    First we deal with the case where $k$ is even.
    We need to prove that
    $r_{k + 1}(k + 3, k + 3) \ge 2 r_{k + 1}(k + 2, k + 3) - 1$.
    The proof is by constructing a coloring $\chi^{(k + 1)}$ satisfying Statement \ref{4.1_statement_1} of Lemma~\ref{partition_2}.
    The coloring $\chi^{(k + 1)}$ is defined as following:
    $$\forall (u_1, u_2) \in \mathbf{V_{k + 1}}(2), ~\chi^{(k + 1)}(u_1, u_2) = \chi^{(k)}(u_1 - 1, u_2).$$

    Since $u_1 + u_2 = k + 1$ is odd, it must be $u_1 \ge u_2 + 1$, thus $\chi^{(k)}(u_1 - 1, u_2)$ is defined.
    We need to show that the following two conditions hold:
    (\romannumeral1) $\forall (v_1, v_2) \in \mathbf{P_{k+3}}(2)$, $\exists (u_1, u_2) \in \mathbf{V_{k+1}}(2)$ such that $(v_1, v_2) \ge_c (u_1, u_2)$ and $\chi^{(k + 1)}(u_1, u_2)=\text{blue}$;
    (\romannumeral2) $\forall (v_1, v_2) \in \mathbf{Q_{k+2}}(2)$, $\exists (u_1, u_2) \in \mathbf{V_{k+1}}(2)$ such that $(v_1, v_2) \ge_c (u_1, u_2)$ and $\chi^{(k + 1)}(u_1, u_2)=\text{red}$.

    We prove condition (\romannumeral1) holds first.
    Since $v_1 + v_2 = k+3$ is odd, $v_1 \ge v_2 + 1$. Observe that $(v_1 - 1, v_2) \in \mathbf{P_{k+2}}(2)$.
    In the inductive step, it is assumed that the case of $k$ holds, we have that $\forall (v_1', v_2') \in \mathbf{P_{k+2}}(2), \exists (u_1', u_2') \in \mathbf{V_k}(2)$ such that $(v_1', v_2') \ge_c (u_1', u_2')$ and $\chi^{(k)}(\mathbf{u}) = \text{blue}$.
    Now let $v_1 - 1 = v_1', v_2 = v_2'$.
    Since $(v_1 - 1, v_2) \ge_c (u_1', u_2')$, we have that $(v_1, v_2) \ge_c (u_1' + 1, u_2')$. By definition, $\chi^{(k + 1)}(u_1' + 1, u_2') = \chi^{(k)}(u_1', u_2') = \text{blue}$ and $(v_1, v_2)$ contains a blue edge.

    We also need to show that condition (\romannumeral2) holds.
    By the same reasoning $\forall (v_1, v_2) \in \mathbf{Q_{k+3}}(2)$, $\exists (u_1, u_2) \in \mathbf{V_{k+1}}(2)$ such that $(v_1, v_2) \ge_c (u_1, u_2)$ and $\chi^{(k + 1)}(u_1, u_2)=\text{red}$.

    Finally, we prove the case where $k$ is odd.
    We need to prove that
    $r_{k + 1}(k + 3, k + 2) \ge 2 r_{k + 1}(k + 2, k + 2) - 1$.
    $\forall (v_1, v_2) \in \mathbf{P_{k+3}}(2)$, if $v_1 \ge v_2 + 1$, analogy to the even case, $(v_1,v_2)$ must contain a blue edge. But when $v_1 = v_2 = \frac{k+3}{2}$, $(v_1, v_2)$ contains edges $(\frac{k+3}{2}, \frac{k-1}{2})$ and $(\frac{k+1}{2}, \frac{k+1}{2})$. Note that $\chi^{(k+1)}(\frac{k+1}{2}, \frac{k+1}{2})$ is not defined yet, because $(\frac{k+1}{2} - 1, \frac{k+1}{2})$ is not valid. Thus we assign $\chi^{(k+1)}(\frac{k+1}{2}, \frac{k+1}{2})$ to a color different from $\chi^{(k+1)}(\frac{k+3}{2}, \frac{k-1}{2})$. So $(v_1, v_2)$ still contains a blue edge.
    Similarly, $\forall (v_1, v_2) \in \mathbf{Q_{k+2}}(2)$, we have $v_1 \ge v_2 + 1$ because $v_1 + v_2 = k + 2$ is odd. Then by the same reasoning in the even $k$ case, $(v_1, v_2)$ must contain a red edge.
\end{proof}

\begin{theorem}\label{large_q}
    Given any integer $k \ge 4$, for any integers $p \ge k + 2, q \ge k + 1$, $r_k(p, q) \ge d \cdot (r_k(p - 1, q) - 1) + 1$ holds, where $d = \lfloor \frac{q - 1}{k - 2} \rfloor$.
\end{theorem}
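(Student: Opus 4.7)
The plan is to instantiate the Pasting framework of \S\ref{proof_procedure} with $d = \lfloor (q-1)/(k-2) \rfloor$. By Lemma~\ref{partition_2}, it will suffice to exhibit a coloring $\chi$ on crossing primal cardinality vectors (those in $\mathbf{V_k}(d)$ with $\pi \ge 2$) such that every $\mathbf{v} \in \mathbf{P_p}(d)$ contains a blue sub-vector and every $\mathbf{v} \in \mathbf{Q_q}(d)$ contains a red one. The case $d = 1$ reduces to the trivial inequality $r_k(p,q) \ge r_k(p-1, q)$, so I would focus on $d \ge 2$.

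The observation driving the construction is that the choice $d = \lfloor (q-1)/(k-2) \rfloor$ forces $v_1 \ge k-1$ for every $\mathbf{v} \in \mathbf{Q_q}(d)$: indeed, $d(k-2) \le q-1 < q$ implies $v_1 \ge \lceil q/\pi(\mathbf{v}) \rceil \ge \lceil q/d \rceil \ge k-1$, while $\pi(\mathbf{v}) \ge 2$ follows from $v_1 \le q-1$. Motivated by this, I would declare
\begin{equation*}
\chi(\mathbf{v'}) = \text{red} \text{ if } v'_1 = k-1, \qquad \chi(\mathbf{v'}) = \text{blue} \text{ if } v'_1 \le k-2.
\end{equation*}
This covers every crossing vector since $v'_1 \in \{1,\ldots,k-1\}$ when $\pi(\mathbf{v'}) \ge 2$, and the red condition is immediate because $(k-1,1) \le_c \mathbf{v}$ for every $\mathbf{v} \in \mathbf{Q_q}(d)$ by the pigeonhole observation above.

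For the blue condition, I would split on the value of $v_1$ for $\mathbf{v} \in \mathbf{P_p}(d)$. When $v_1 \le k-2$, one can greedily peel off coordinates of $\mathbf{v}$ in order until they sum to $k$; since $\sum v_i = p \ge k+2$ and $v_1 < k$, the result is a valid sub-vector $\mathbf{v'}$ with $v'_1 = v_1 \le k-2$ and $\pi(\mathbf{v'}) \ge 2$, hence blue. When $v_1 \ge k-1$, the defining constraint $v_1 \le p-2$ forces $v_2 + v_3 + \cdots \ge 2$: if $\pi(\mathbf{v}) = 2$ then $v_2 \ge 2$ and one can take $\mathbf{v'} = (k-2, 2)$, a valid non-increasing vector because $k \ge 4$; if $\pi(\mathbf{v}) \ge 3$ one can take $\mathbf{v'} = (k-2, 1, 1)$. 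In both cases $v'_1 = k-2$ and the sub-vector is blue.

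The one real subtlety is handling the blue requirement when $\mathbf{v}$ has a single large coordinate---this is exactly where the $v_1 \le p-2$ cutoff of $\mathbf{P_p}(d)$ becomes essential to provide the size-$2$ remainder needed to avoid the forbidden $(k-1,1)$ shape in $\mathbf{v'}$. I expect the remaining case work to be routine and to impose no additional constraints beyond $k \ge 4$, $p \ge k+2$, $q \ge k+1$; in particular, no appeal to Lemma~\ref{constant} or Lemma~\ref{dp} is needed because the explicit coloring already works uniformly in $p$.
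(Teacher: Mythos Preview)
Your proposal is correct and follows essentially the same strategy as the paper: the coloring you define (red exactly on $(k-1,1)$, blue otherwise among crossing vectors) is identical to the paper's, and the pigeonhole argument forcing $v_1\ge k-1$ on $\mathbf{Q_q}(d)$ is the same key observation. The only differences are organizational: for the blue condition you case-split on $v_1\le k-2$ versus $v_1\ge k-1$ while the paper splits on $\pi(\mathbf{v})=2$ versus $\pi(\mathbf{v})\ge 3$, and you treat $d=2$ directly whereas the paper defers $d\le 2$ to Theorem~\ref{two_divide}; your version is therefore slightly more self-contained.
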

\begin{proof}%[Proof of Theorem~\ref{large_q}]
    If $d \le 2$, this is implied by Theorem~\ref{two_divide}.
    Now assume $d \ge 3$.
    Define coloring as follows: $\chi^{(k)}(\mathbf{v}) = \text{red}$ if and only if $\mathbf{v} = (k-1, 1)$; otherwise $\chi^{(k)}(\mathbf{v}) = \text{blue}$. We show that under such $\chi^{(k)}$, $\forall \mathbf{v} \in \mathbf{P_p}(d)$, $\exists \mathbf{v'} \in \mathbf{V_k}(d)$ such that $\mathbf{v'} \le_c \mathbf{v}$ and $\chi^{(k)}(\mathbf{v'})=\text{blue}$;
    and $\forall \mathbf{v} \in \mathbf{Q_q}(d)$, $\exists \mathbf{v'} \in \mathbf{V_k}(d)$ such that $\mathbf{v'} \le_c \mathbf{v}$ and $\chi^{(k)}(\mathbf{v'})= \text{red}$.

    Firstly, for any $\mathbf{v} \in \mathbf{P_p}(d)$, there are two cases. The first case is that if $\pi(\mathbf{v}) \ge 3$, then $\exists \mathbf{v'} \in \mathbf{V_k}(d)$, such that $\pi(\mathbf{v'}) \ge \min\{k, \pi(\mathbf{v})\}$ and $\mathbf{v} \ge_c \mathbf{v'}$, so $\chi^{(k)} = \text{blue}$ since $\min\{k, \pi(\mathbf{v})\} \ge 3$. The existence of such $\mathbf{v'}$ can be proved by the following process:
    Initialize $\mathbf{v'}$ as $\mathbf{v}$.
    If $\exists i \in [\pi(\mathbf{v'})-1], v'_i > v'_{i+1}$, let $\mathbf{v''} \coloneqq \mathbf{v'} - \mathbf{e_i}$; 
    else let $\mathbf{v''} \coloneqq \mathbf{v'} - \mathbf{e_{\pi(\mathbf{v'})}}$. 
    Clearly $\mathbf{v'} \ge_c \mathbf{v''}$ and $\mathbf{v''} \in \mathbf{P_{p-1}}(d')$ where $d' = d$ or $d-1$.
    Update $\mathbf{v'}$ to $\mathbf{v''}$, $p$ to $p-1$, and $d$ to $d'$, then repeat the above.
    %$\forall \mathbf{v} \in \mathbf{P_p}(d)$, $\exists \mathbf{v'} \in \mathbf{P_{p-1}}(d)$ such that $\mathbf{v} \ge_c \mathbf{v'}$ and $\mathbf{v'} \coloneqq \mathbf{v} - \mathbf{e_i}$ where $i \in [\pi(\mathbf{v})]$, because either $v_{\pi(\mathbf{v})} \ge 1$ (in which case $i = \pi(\mathbf{v})$), or . 
    %By Remark~\ref{remark3} it suffices to consider $\mathbf{P_p}(p)$ instead of $\mathbf{P_p}(d)$ when $p \le d$.
    %For the first one we set $\mathbf{v'} \coloneqq \mathbf{v} - \mathbf{e_{\pi(\mathbf{v})}}$, and set $\mathbf{v'} \coloneqq \mathbf{v} - \mathbf{e_i}$ for the second one. 
    We do this until $p$ reaches $k$. 
    If $\pi(\mathbf{v}) \ge k$ then $v'_{k} \ge 1$, else $v'_{\pi(\mathbf{v})} \ge 1$. Both of them lead to $\pi(\mathbf{v'}) \ge \min\{k, \pi(\mathbf{v})\}$ and $\mathbf{v} \ge_c \mathbf{v'}$ by transitivity.
    The second case is that $\pi(\mathbf{v}) = 2$.
    This is straightforward since $v_1 \le p-2$ (Lemma~\ref{partition_2}), it must be $v_2 \ge 2$. Just let $u_2 = \max(v_2 - \lceil \frac{p - k}{2} \rceil, 2)$ and $u_1 = k - u_2$, we have $u_1 \ge u_2 \ge 2$ and $\chi^{(k)}(\mathbf{u}) = \text{blue}$  by definition.

    Secondly, $\forall \mathbf{v} \in \mathbf{Q_q}(d)$, by the Pigeonhole principle, it must be $v_1 \ge \lfloor \frac{q - 1}{d} \rfloor + 1 = k - 1$. Additionally, by Lemma~\ref{partition_2} we have $v_1 \le q - 1$, thus $v_2 \ge 1$, so $(v_1, v_2) \ge_c (k-1, 1)$. Since $\chi^{(k)}(k-1, 1) = \text{red}$ by definition, we proved that $\forall \mathbf{v} \in \mathbf{Q_q}(d)$, $\exists u \in \mathbf{V_k}(d)$, such that $\mathbf{v} \ge_c \mathbf{u}$ and $\chi^{(k)}(\mathbf{u}) = \text{red}$.

    Combining both we proved the theorem.
 \end{proof}

\section{Improved Lower Bounds}\label{lower_bounds}
We summarize some of our improved lower bounds for Ramsey numbers on hypergraphs in this section.

\subsection{$4$-hypergraph}

Previous best lower bounds for Ramsey number on $4$-hypergraphs can be found in \cite{DBLP:journals/dm/SongYL95} and \cite{radziszowski1994small}. We point out that some of their values are based on \cite{DBLP:journals/dm/Shastri90} whose calculation of $r_4(7,7)$ is wrong. The following lower bounds values in the ``Previous'' column are re-calculated in a corrected way using their methods.
We also add a ``Reference'' column for the method we use to derive our results. Some representative results are displayed below.

\begin{table}[h]
\centering\renewcommand{\arraystretch}{1.2}
\begin{tabular}{|c|c|c|c|}\hline
 & Previous & Our Result & Reference \\\hline
 %$r_4(5,5) \ge$ & 34 & 34 & Algorithm \ref{new_alg} \\\hline
 $r_4(5,6) \ge$ & 37 & \textbf{67} & Theorem \ref{k4} or \ref{two_divide} \\\hline
 $r_4(6,6) \ge$ & 73 & \textbf{133} & Theorem \ref{two_divide} or \ref{large_q} \\\hline
 $r_4(6,7) \ge$ & 361 & \textbf{661} & Theorem \ref{k567_p-1} \\\hline
 $r_4(6,13) \ge$ & 23041 & \textbf{50689} & Theorem \ref{large_q} \\\hline
 $r_4(7,7) \ge$ & 2161 & \textbf{3961} & Theorem \ref{k567_p-1} \\\hline
 $r_4(8,8) \ge$ & 105841 & \textbf{194041} & Theorem \ref{k567_p-1} \\\hline
\end{tabular}
%\caption{Average $10^6$ flips per second}
\end{table}

Using our Algorithm \ref{new_alg}, a coloring for proving $r_4(5,5) \ge 34$ can be found (see Appendix~\ref{local_search}).
The subsequent lower bounds can be obtained using the corresponding recurrence relations.

\subsection{$5$-hypergraph}
Before this work, there is no constructive lower bounds for Ramsey numbers on $5$-hypergraphs.

Using our Algorithm \ref{new_alg}, a coloring for proving $r_5(6,6) \ge 72$ can be found, which serves as a certificate of the lower bound.
Subsequently, lower bounds for $r_5(p, q)$ can be calculated using our Theorems \ref{k_p-1} and \ref{large_q}.
%The notable results on small graphs are $r_5(6,6) \ge 72$ and $r_5(7,7) \ge 427$.

\subsection{$\ge 6$-hypergraph}
Previously, there is neither constructive nor recursive lower bounds for Ramsey number on $\ge 6$-hypergraphs.

The base case of the recurrence relation is
$r_k(k+1, k+1) \ge r_k(k + 1, k) = k + 1$.
For any $k \ge 6$, lower bounds for $r_k(p, q)$ can be calculated using our Theorems \ref{k567_p-1}, \ref{k_p-1}, \ref{two_divide} and \ref{large_q}.
%Additionally, we believe that our Conjecture~\ref{any_k_p-1} is true, leading to tighter lower bounds for arbitrary $k$.

\paragraph{Acknowledgements.}
Research at Princeton University partially supported by an innovation research grant from Princeton and a gift from Microsoft.

%
% ---- Bibliography ----
%
% BibTeX users should specify bibliography style 'splncs04'.
% References will then be sorted and formatted in the correct style.
%
\bibliographystyle{alpha}
\bibliography{ramsey}
%
%\newpage

\appendix

\section{The Constructive Algorithm}\label{local_search}

For $k$-hypergraphs, the first non-trivial Ramsey number $r_k(k + 1, k + 1)$  is at least $r_k(k + 1, k) = k + 1$, which serves as a lower bound for $r_k(k + 1, k + 1)$, and can be fed to our recurrence relation to derive lower bounds for all $r_k(p,q)$. But note that there is a straightforward CNF encoding for proving $r_k(k + 1, k + 1) > N$, thus using SAT solver on it might do much better:
Each $k$-hyperedge corresponds to a variable; each $(k + 1)$-clique corresponds to two clauses: the first clause contains $k+1$ positive literals, thus must contain a true (blue) variable; the second clause contains $k+1$ negative literals, thus must contain a false (red) variable. If CNF corresponding to $k$-hypergraph on $N$ vertices has a solution, we explicitly found a good coloring, which is a proof.

In this section, we propose a new algorithm for proving $r_k(k + 1, k + 1) > N$ based on local search. We give necessary definitions in our local search algorithm, followed by the algorithmic framework.
Note that some previous lower bounds are obtained by fairly complex Genetic algorithms \cite{exoo1989lower}, while our local search algorithm is simple and easy to implement.

%Note that previous best results are mainly based on Simulated Annealing or Generic Algorithms (see \S{\ref{intro}}).
%, whose experiments are conducted many years ago with no running time data, therefore we just briefly introduce some empirical results here:
\begin{comment}
\begin{itemize}
    \item $r_2(5,5) \ge 43$: 1500 seconds.
    \item $r_3(5,4) \ge 33$: 20 seconds.
    \item $r_4(5,5) \ge 34$: 300 seconds.
    \item $\mathbf{r_5(6,6) \ge 72}$: 138 hours (previously unproved).
\end{itemize}
It should be mentioned that theorem proving in Ramsey number is a nightmare for SAT-solving: state-of-the-art SAT solvers fail to find coloring on graph with about 20 vertices. %Since it is not the main topic of this paper, we omit the comparison results due to limitation of space.

\end{comment}

\subsection{Notations in Local Search}
%The following terminologies are widely-used in local search for SAT.
We need some additional notations.
A satisfied clauses has at least $1$ true literal, and a $2$-satisfied clause has at least $2$ true literals. %Additional to what we have defined in Definition~\ref{CNF_SAT},
We use $V(F)$ to denote all variables appear in CNF $F$.
\begin{definition}
    Given CNF $F$ and a truth assignment $\alpha$ on $V(F)$, $\forall v \in V(F)$, define $\text{score}(v)$ as the increment in number of satisfied clauses after flipping $v$, and define $\text{subscore}(v)$ as the increment in number of $2$-satisfied clauses after flipping $v$, and define $\text{age}(v)$ as the number of flips performed since the last flip of $v$.
\end{definition}

In local search for SAT, the goal is to minimize the number of unsatisfied clause, thus intuitively one should prefer to flip variable with greater $\text{score}$ and greater $\text{subscore}$. However as we will show later in our algorithm, to solve CNF corresponding to theorem proving, one should prefer variable with smaller $\text{subscore}$.

One vital problem in local search is to deal with local optimal, i.e., a point in the solution space with no better point nearby to move to.
Two influential strategies in the literature are:
(\romannumeral1) \emph{Tabu} \cite{DBLP:conf/aaai/MazureSG97}: variables with age fewer than a preset threshold are forbidden to flip,
(\romannumeral2) \emph{Configuration checking} \cite{DBLP:conf/aaai/CaiS12}: variables appearing in the same clause are called \emph{neighborhood} to each other, variables with no neighborhoods flipped since its last flip are forbidden to flip.
Unfortunately, both Tabu and Configuration checking fail in theorem proving, because Tabu ignores local structure and Configuration checking never forbids any variable dues to the fact that all variables are neighborhoods to each other (any two hyperedges appear in some clique).

Our key observation is that in a hypergraph, two hyperedges can either share endpoints or not, therefore a mechanism called \emph{Neighborhood checking} can be defined as follows.
%\footnote{There are different cases depending on the number of shared endpoints, but we did not find any significant improvement in application.}
\begin{definition}
    Given CNF $F$ corresponding to $r_k(k + 1, k + 1) > N$, $\forall v_1, v_2 \in V(F)$, if the hyperedge corresponding to $v_1$ shares endpoints with the hyperedge corresponding to $v_2$, then $v_1$ is called a \emph{neighborhood} of $v_2$.
    Define \emph{neighborhood checking} $\text{nc}(v)$ as an indicator of whether any neighborhood of $v$ has been flipped since the last flip of $v$.
\end{definition}

The updating rules for $\text{nc}(v)$ is straightforward: when $v$ is flipped, set $\text{nc}(v) \coloneqq \text{False}$ and for each of the neighborhood $u$ of $v$, set $\text{nc}(u) \coloneqq \text{True}$.

\subsection{Algorithmic Framework}\label{new_alg}
First we define the tie-breaking function $H$ used in our algorithm: return variable with the greatest $\text{score}$; if ties, return one with the \emph{smallest} $\text{subscore}$; if still ties, return one with the greatest $\text{age}$.
We give the algorithmic framework as follows:

%\noindent\rule{\columnwidth}{0.75pt}
%\vspace{-0.45cm}
\begin{enumerate}%[label=\subscript{Step}{\arabic*}]
    \item Input CNF $F$ corresponding to $r_k(k + 1, k + 1) > N$.
    \item Generate a uniform random assignment on all variables.
    \item Repeat the following until all clauses are satisfied or a preset number of flips is reached (cutoff):
        \begin{itemize}
            \item If there exists variable $v$ with $\text{nc}(v) = \text{True}$ and $\text{score}(v) > 0$, then flip $v$. If there are more than $1$ such variables, breaks tie using function $H$.
            \item Else if no such variable exists, choose an unsatisfied clause $c$ uniformly at random. Choose the best variable in $c$ according to function $H$, then flip it.
        \end{itemize}
    \item If all clause are satisfied, output the satisfying truth assignment. Else if the cutoff is reached, go to Step 2.
\end{enumerate}
%\vspace{-0.2cm}
%\noindent\rule{\columnwidth}{0.75pt}

%The counter-intuitive objective of minimizing \text{subscore}$ can be explained as follows: think of a distribution of number of exact-$\tau$-satisfied clauses under a satisfied assignment of CNF, the general CNF (especially uniform random generated) has a heavy-tailed distribution, where minimizing 1-satisfied clauses is beneficial to minimizing 0-satisfied clauses; but theorem proving in Ramsey number corresponds to a CNF with fat-tail distribution,

The implementation of the algorithm can be found in \url{https://github.com/sixueliu/RamseyNumber}.
This link also contains two hypergraphs found by our algorithm:
(\romannumeral1) a $33$-vertex $4$-hypergraph with no $5$-clique nor $5$-independent-set, i.e., a proof for $r_4(5,5) \ge 34$;
(\romannumeral2) a $71$-vertex $5$-hypergraph with no $6$-clique nor $6$-independent-set, i.e., a proof for $r_5(6,6) \ge 72$.

\end{document}